\numberwithin{equation}{section}
\newcommand{\C}{\mathbb{C}}
\newcommand{\Z}{\mathbb{Z}}
\newcommand{\Ptau}{P^{E_\tau}}
\providecommand{\abs}[1]{\left\lvert#1\right\rvert}
\newcommand{\bbracket}[1]{\left[#1\right]}
\newcommand{\fbracket}[1]{\left\{#1\right\}}
\newcommand{\bracket}[1]{\left(#1\right)}
\newcommand{\dpa}[1]{{\pa\over \pa #1}}
\newcommand{\vjump}[1]{\vskip #1\baselineskip}
\newcommand{\bl}{\textbf}
\newcommand{\mc}{\mathcal}
\newcommand{\cinfty}{C^{\infty}}
\newcommand{\pa}{\partial}
\newcommand{\iso}{\cong}
\newcommand{\suml}{\sum\limits}
\DeclareMathOperator{\im}{Im}
\theoremstyle{plain}
\newtheorem{thm}{Theorem}[section]
\newtheorem{thm-defn}{Theorem/Definition}[section]
\newtheorem{lem}{Lemma}[section]
\newtheorem{prop}{Proposition}[section]
\newtheorem{cor}{Corollary}[section]
\newtheorem*{claim}{Claim}
\theoremstyle{definition}
\newtheorem{dfn}{Definition}[section]
\newtheorem{eg}{Example}[section]
\theoremstyle{remark}
\begin{document}

 \title{\mbox{Feynman Graph Integrals and Almost Modular Forms}}
  \author{Si Li}
  \date{}

  \maketitle


\begin{abstract} We introduce a type of graph integrals on elliptic curves from the heat kernel. We show that such graph integrals have modular properties under the modular group $SL(2, \Z)$, and prove the polynomial nature of the anti-holomorphic dependence.
\end{abstract}



\section{Introduction}
Modular forms arise naturally in physics as correlation functions of quantum system with modular groups as symmetries. One such example is the topological string theory on Calabi-Yau manifolds. The topological string produces geometric invariants $F_g$ for each non-negative integer $g$, which can be viewed as generalized modular forms on the Calabi-Yau moduli space. However, $F_g$'s are in general not holomorphic objects.  They satisfy the holomorphic anomaly equation as shown in the work of Bershadsky, Cecotti, Ooguri and Vafa \cite{BCOV}. When the Calabi-Yau is an elliptic curve, R. Dijkgraaf \cite{Dijkgraaf-mirror} anticipated the interpretation of $F_g$ in the language of almost modular forms. Later, Aganagic, Bouchard and Klemm  \cite{topstring-modularform} generalize almost modular forms to describe local Calabi-Yau models in topological strings.

In this paper, we will focus on elliptic curves and their moduli. By ``almost modular form" of weight $k$, we mean in a weak sense of \cite{almost-modular-form}: a function $f(\tau, \bar \tau)$ on the upper half-plane $\mathcal H$, which is modular of weight $k$
\begin{align*}
          f(\gamma \tau, \overline{\gamma\tau})=(C\tau+D)^k f(\tau, \bar\tau), \ \ \forall \gamma=\begin{pmatrix} A& B\\ C & D \end{pmatrix}\in SL(2,\Z)
\end{align*}
where $\gamma \tau={A\tau+B\over C\tau +D}$, and the anti-holomorphic dependence of $f$ is of polynomial in ${1\over \im \tau}$, i.e.
\begin{align*}
         f(\tau, \bar\tau)=\sum_{i=0}^N f_i(\tau){1\over \bracket{\im \tau}^i}
\end{align*}
for some non-negative integer $N$ and holomorphic functions $f_i(\tau)$. The famous $\bar\tau\to \infty$ limit \cite{BCOV} picks up the leading holomorphic term $f_0(\tau)$ in this context, which is quasi-modular \cite{almost-modular-form}.

Motivated by  topological string on elliptic curves \cite{Dijkgraaf-mirror, Si-Kevin, thesis}, we consider the following integral $W_{\Gamma}$ associated with any graph $\Gamma$: for each edge of $\Gamma$, we associate a kernel function constructed from the heat kernel; for each vertex, we associate a copy of integration on the elliptic curve. See section \ref{section-graph-integral} for precise definitions. $W_{\Gamma}$ depends on the complex structure of the elliptic curve, and can be viewed as a function on the upper half-plane.

\begin{thm}
$W_\Gamma$ is an almost modular form of weight $2|E(\Gamma)|$ in the above sense. Here $|E(\Gamma)|$ is the number of edges in $\Gamma$.
\end{thm}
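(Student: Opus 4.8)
The plan is to treat the two assertions --- modular covariance of weight $2\abs{E(\Gamma)}$, and polynomial dependence on $1/\im\tau$ --- separately, in both cases reducing everything to two structural properties of the kernel $P^{E_\tau}(z,w)$ attached to an edge. For an edge $e$ with endpoints $v,v'$ I write $P_e=P^{E_\tau}(z_v,z_{v'})$. The two properties, both of which I would extract from the explicit heat-kernel (theta-function) expression for $P$, are: \emph{(i)} up to a universal multiplicative constant $P^{E_\tau}(z,w)=\pa_z\pa_w G^{E_\tau}(z,w)=\wp(z-w\mid\tau)+\widehat G_2(\tau)$, where $G^{E_\tau}$ is the Green's function of the flat Laplacian normalized against the unit-mass volume form and $\widehat G_2(\tau)=G_2(\tau)-\pi/\im\tau$ is the non-holomorphic modular completion of the weight-two Eisenstein series; and \emph{(ii)} accordingly $P^{E_\tau}(z,w)=A(z-w,\tau)+c/\im\tau$ with $A(\,\cdot\,,\tau)$ meromorphic in its first argument and holomorphic in $\tau\in\mc H$, and $c$ a universal constant. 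The coefficient of $1/\im\tau$ here is exactly the one rendering $P^{E_\tau}$ \emph{modular} (not merely quasi-modular) of weight $2$ --- the ``$\widehat E_2$'' mechanism --- which is why each edge will eventually contribute weight $2$.

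For the modular property I would argue geometrically. Since $AD-BC=1$, multiplication by $C\tau+D$ is a biholomorphism $\phi_\gamma\colon E_{\gamma\tau}\xrightarrow{\ \sim\ }E_\tau$; under $\phi_\gamma$ the unit-mass volume form $\omega_\tau$ is invariant, and the Green's function pulls back to itself, because the construction $G=\int_0^\infty\!\bigl(K_t-\Vol^{-1}\bigr)dt$ is insensitive to the constant rescaling of the flat metric by $\abs{C\tau+D}^{-2}$ (the accompanying rescaling of $t$ being absorbed in the integral). Differentiating twice, $P^{E_{\gamma\tau}}(w,w')=(C\tau+D)^2\,P^{E_\tau}\!\bigl((C\tau+D)w,(C\tau+D)w'\bigr)$. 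Substituting $z_v=(C\tau+D)w_v$ at every vertex in the integral defining $W_\Gamma(\gamma\tau,\overline{\gamma\tau})$, each edge kernel emits one factor $(C\tau+D)^2$ and the vertex measures contribute nothing, giving $W_\Gamma(\gamma\tau,\overline{\gamma\tau})=(C\tau+D)^{2\abs{E(\Gamma)}}W_\Gamma(\tau,\bar\tau)$.

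For the polynomiality I would use property (ii) and multilinearity: expanding $\prod_{e\in E(\Gamma)}P_e=\sum_{S\subseteq E(\Gamma)}(c/\im\tau)^{\abs S}\prod_{e\notin S}A_e$ and integrating term by term, and writing $\Gamma\setminus S$ for the graph with the edges of $S$ deleted and
\begin{equation*}
W^{\mathrm{hol}}_{\Gamma'}(\tau):=\int_{(E_\tau)^{V(\Gamma')}}\ \prod_{e'\in E(\Gamma')}A_{e'}\ \prod_{v}\,\omega_\tau ,
\end{equation*}
one gets $W_\Gamma(\tau,\bar\tau)=\sum_{i=0}^{\abs{E(\Gamma)}}f_i(\tau)/(\im\tau)^i$ with $f_i(\tau)=c^{\,i}\sum_{\abs S=i}W^{\mathrm{hol}}_{\Gamma\setminus S}(\tau)$. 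It then remains to see that each truncated integral $W^{\mathrm{hol}}_{\Gamma'}$ is holomorphic in $\tau$, which I would do by parametrizing $E_\tau$ via $z_v=a_v+b_v\tau$ with $(a_v,b_v)\in(\R/\Z)^2$: then $\omega_\tau$ becomes the $\tau$-independent measure $da_v\,db_v$, each edge factor becomes $A\bigl((a-a')+(b-b')\tau,\ \tau\bigr)$ with $(a,b),(a',b')$ the coordinates of the two endpoints --- jointly holomorphic in $\tau\in\mc H$ away from a real-codimension-two locus --- and the integral converges locally uniformly in $\tau$, so Morera's theorem applies. This proves the statement with $N=\abs{E(\Gamma)}$, and shows in addition that the leading term $f_0=W^{\mathrm{hol}}_\Gamma$ is built from $\wp$ and $G_2$ alone, hence quasi-modular, matching the $\bar\tau\to\infty$ picture.

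The main obstacle is analytic rather than structural. Because $P^{E_\tau}(z,w)\sim(z-w)^{-2}$ is not absolutely integrable near the partial diagonals, $W_\Gamma$ exists only after the regularization of Section~\ref{section-graph-integral} (morally a principal value, exploiting $\int_0^{2\pi}e^{-2i\theta}\,d\theta=0$), and the two reductions above silently interchange this regularized integral with a change of variables --- $w_v\mapsto(C\tau+D)w_v$ in the modularity argument, and the multilinear expansion together with the passage to $(a_v,b_v)$-coordinates in the polynomiality argument. The crux is therefore to verify that the regularization is stable under these operations and, for the polynomiality argument, that the regularized truncated integrals $W^{\mathrm{hol}}_{\Gamma'}$ still converge \emph{uniformly on compacta in $\tau$}, so that holomorphy survives in the limit. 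This is the one step where the estimates of Section~\ref{section-graph-integral} are genuinely used; granting them, the remainder is bookkeeping organized by the weight-two transformation law of $P^{E_\tau}$.
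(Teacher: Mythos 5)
Your modularity argument is essentially the paper's: the biholomorphism $E_{\gamma\tau}\xrightarrow{\sim}E_\tau$ given by multiplication by $C\tau+D$ makes each edge kernel emit a factor $(C\tau+D)^2$, and the interchange with the regularization is handled exactly as you suspect it must be, namely by performing the change of variables at finite cutoff, where one finds $W_{\Gamma}\bracket{P^{E_{\gamma\tau}}_{\epsilon,L}}=(C\tau+D)^{2|E(\Gamma)|}W_{\Gamma}\bracket{\Ptau_{|C\tau+D|^2\epsilon,\,|C\tau+D|^2L}}$; the rescaled cutoffs are harmless in the limit. That half is fine.

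The polynomiality half, however, has a genuine error, and it sits precisely at the point you flag as ``the crux'' and then grant. Writing $P=A+c/\im\tau$ with $A=\tfrac{1}{4\pi}\wp+\tfrac{1}{12\pi}E_2$ and expanding multilinearly accounts only for \emph{edge-deletion} contributions to the $\bar\tau$-dependence; it misses the contributions coming from the diagonals, where the regularization itself injects $\bar\tau$-dependence. This is already visible for the self-loop: the heat-kernel regularization of $\wp$ at the diagonal produces $E_2^*=E_2-\tfrac{3}{\pi\im\tau}$, not $E_2$ (Lemma \ref{self-loop}), and the same mechanism operates whenever two vertices are joined by $k\geq 1$ edges, where $\prod_e A_e\sim z^{-2k}$ is only conditionally convergent and any excision or heat-kernel regularization contributes boundary terms carrying $\bar\tau$. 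Concretely, for $\Gamma$ the $2$-cycle your formula predicts $f_1=2c\,W^{\mathrm{hol}}_{\text{1-edge}}$, whereas the single-edge graph integral vanishes identically (the heat kernel integrates to $1$ in each vertex variable, so $\pa_z^2K_t$ integrates to $0$), while the true $f_1$ is a nonzero multiple of $E_2$ coming from the \emph{contracted} graph (the self-loop). For the same reason your Morera argument fails: $W^{\mathrm{hol}}_{\Gamma'}$ is not a locally uniformly convergent integral of a holomorphic integrand, and its principal-value regularization is not holomorphic in $\tau$ (a Stokes computation for $\mathrm{PV}\!\int_{E_\tau}\wp\,d^2z$ already produces a $1/\im\tau$ term from the quasi-periods). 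The paper circumvents all of this by computing $\pa_{\bar\tau}W_{(\Gamma,n)}$ directly: the heat equation turns the $\bar\tau$-derivative of each propagator into a total $t$-derivative, leaving boundary terms at $t=L$ (edge deletion) and $t=\epsilon$ (edge contraction, localizing on the diagonal via Proposition \ref{deformed-bracket}), which yields $\pa_{\bar\tau}W_\Gamma=\tfrac{i}{8(\im\tau)^2}\sum_{e}\bracket{W_{\Gamma/e}-W_{\Gamma\backslash e}}$ and then polynomiality by induction on $|E(\Gamma)|$. The contraction terms $W_{\Gamma/e}$ are exactly what your expansion omits, and they are generically nonzero.
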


We can also put certain holomorphic derivatives on the propagator and obtain the graph integral for a decorated graph. The resulting graph integral is again almost modular form with specific weight. See Corollary \ref{almost-modular-result}. As shown in \cite{Dijkgraaf-mirror, Si-Kevin, thesis}, $F_g$ on elliptic curves for each $g$ is given by combinatorial sum of graph integrals of this type.

The paper is organized as follows: In section 2, we describe the BCOV propagator which is the building block of the graph integral. In section 3, we define the graph integral considered in this paper. In section 4, we prove the modular property of graph integrals. In section 5, we prove that the graph integral has polynomial dependence in ${1\over \im \tau}$. In the appendix, we provide all the technical details of the necessary estimates for the graph integrals. \\

\noindent\bl{Acknowledgement}: The author thanks K. Costello for many stimulating discussions on two dimensional quantum field theory, and thanks S.T. Yau for useful conversations on quasi-modular forms.

\section{BCOV propagator on the elliptic curve}
Let $\mathcal H=\fbracket{\tau\in \C| \im \tau>0}$ be the complex upper half-plane. Let $E_\tau=\C/\Lambda_\tau$ be the elliptic curve associated with the lattice
$$
  \Lambda_\tau={\Z\oplus \tau \Z}, \quad \tau\in \mathcal H
$$
We will use $z$ for the standard linear coordinate on $\C$, such that $E_\tau$ is obtained via the equivalence $z\sim z+1\sim z+\tau$. The notation $d^2z$ will always refer to the following measure on $\C$ or $E_\tau$
$$
   d^2z= {i\over 2}dz\wedge d\bar z
$$
Let
$$
    \Delta=-4\dpa{z}\dpa{\bar z}
$$
be the standard flat Laplacian operator on $E_\tau$. We consider the kernel function $K_t^{E_\tau}$ for the heat operator $e^{-t\Delta}$
\begin{align}
    K_t^{E_\tau}(z_1,\bar z_1;,z_2, \bar z_2)={1\over 4\pi t}\sum_{\lambda\in \Lambda_\tau}e^{-\abs{z_1-z_2+\lambda}^2/4t}, \ \ t>0
\end{align}
which is the unique function solving the heat equation
$$
          \bracket{\dpa{t}+\Delta_{z_1}}K_t^{E_\tau}(z_1, \bar z_1;,z_2, \bar z_2)=0
$$
and the initial condition
$$
         \lim_{t\to 0} \int_{E_\tau} d^2z_2 K_t^{E_\tau}(z_1, \bar z_1; z_2, \bar z_2)\phi(z_2, \bar z_2)=\phi(z_1, \bar z_1), \quad \forall \phi\in \cinfty(E_\tau)
$$

\begin{dfn}The \bl{BCOV propagator} $\Ptau_{\epsilon,L}$ is defined to be the smooth kernel function
\begin{align}
     P_{\epsilon,L}^{E_\tau}(z_1, \bar z_1;z_2, \bar z_2)=\int_\epsilon^L{dt}\bracket{\dpa{z_1}}^2 K_t^{E_\tau}(z_1, \bar z_1;,z_2, \bar z_2), \quad \epsilon, L>0
\end{align}
representing the operator $\int_\epsilon^L {dt}\bracket{\dpa{z}}^2 e^{-t\Delta_z}$. We will also use $\Ptau_{0,\infty}(z_1,z_2)$ to represent the limit
\begin{align}
     \Ptau_{0,\infty}(z_1,z_2)\equiv \lim_{\substack{\epsilon\to 0\\ L\to \infty }}\Ptau_{\epsilon, L}(z_1, \bar z_1;,z_2, \bar z_2)
\end{align}
which is singular at $z_1=z_2$.
\end{dfn}

Note that we have dropped the anti-holomorphic dependence in $\Ptau_{0,\infty}$. It's shown in the next lemma that it's holomorphic away from the diagonal $z_1=z_2$.  The kernel $\Ptau_{\epsilon, L}$ is motivated from string theory. It describes the propagator of the Kodaira-Spencer gauge theory, which is originally introduced in \cite{BCOV} on Calabi-Yau 3-folds, and generalized in \cite{Si-Kevin, thesis} to Calabi-Yau manifolds of arbitrary dimensions.

The following Lemma \ref{bcov propagator} and Lemma \ref{self-loop} for Green functions on elliptic curves  are well-known:
\begin{lem}\label{bcov propagator}
\begin{align}
\begin{split}
       \Ptau_{0,\infty}(z_1,z_2)={1\over 4\pi}\wp\bracket{z_1-z_2;\tau}+{\pi\over 12}E_2^*(\tau,\bar\tau)\quad \forall z_1\neq z_2
\end{split}
\end{align}
Here $\wp$ is the Weierstrass's elliptic function
$$
   \wp(z;\tau)={1\over z^2}+\sum_{\lambda\in \Lambda_\tau-\{0\}}\bracket{{1\over (z-\lambda)^2}-{1\over \lambda^2}}
$$
$ E_2^*(\tau,\bar\tau)=E_2(\tau)-{3\over \pi \im \tau}$, and
$$
           E_2(\tau)=1-24\sum_{n=1}^\infty {nq^n\over 1-q^n}, \quad q=e^{2\pi i \tau}
$$
is the second Eisenstein series.
\end{lem}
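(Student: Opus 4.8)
The plan is to identify $\Ptau_{0,\infty}$ with twice the holomorphic derivative of the Green's function of $\Delta$ on $E_\tau$, and then to invoke the classical closed form of that Green's function in terms of $\theta_1$. Put $w=z_1-z_2$; by translation invariance $\Ptau_{\epsilon,L}$ depends only on $w$ and $\bar w$. Split $K^{E_\tau}_t=\frac1{\im\tau}+\widetilde K_t$, where $\frac1{\im\tau}=\frac1{\Vol(E_\tau)}$ is the kernel of the projection onto the constants and $\widetilde K_t$ is the heat kernel of $\Delta$ on their orthogonal complement. Since $\left(\partial_{z_1}\right)^2$ annihilates constants, $\Ptau_{\epsilon,L}=\int_\epsilon^L dt\,\left(\partial_{z_1}\right)^2\widetilde K_t$. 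For $z_1\ne z_2$ the integrand is smooth; using $\widetilde K_t\to\delta_0-\frac1{\im\tau}$ as $t\to0$ (so that $\left(\partial_{z_1}\right)^2\widetilde K_t\to0$ off the diagonal) and the exponential decay of $\widetilde K_t$ as $t\to\infty$, transparent from the Poisson-dual expansion $K^{E_\tau}_t(w)=\frac1{\im\tau}\sum_{k}e^{-4\pi^2t|k|^2}e^{2\pi i\langle k,w\rangle}$ over the dual lattice (whose $k=0$ term is exactly $\frac1{\im\tau}$), one lets $\epsilon\to0$, $L\to\infty$ and exchanges $\left(\partial_{z_1}\right)^2$ with $\int_0^\infty dt$ to get $\Ptau_{0,\infty}(z_1,z_2)=\left(\partial_{z_1}\right)^2 G(w)$, where $G$ is the Green's function: any $\Lambda_\tau$-periodic solution of $\Delta_w G(w)=\delta_0(w)-\frac1{\im\tau}$, the additive constant being immaterial after $\left(\partial_{z_1}\right)^2$.

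Next I would recall the standard expression
\begin{align*}
G(w)=-\frac1{2\pi}\log\left|\frac{\theta_1(w;\tau)}{\eta(\tau)}\right|+\frac{(\im w)^2}{2\im\tau},
\end{align*}
which is $\Lambda_\tau$-periodic (using $\theta_1(w+\tau;\tau)=-e^{-\pi i\tau-2\pi iw}\theta_1(w;\tau)$ together with the corresponding shift of $(\im w)^2/2\im\tau$) and satisfies $-4\partial_w\partial_{\bar w}G=\delta_0-\frac1{\im\tau}$, as one checks from $\theta_1(w)\sim\theta_1'(0)\,w$ near $w=0$ and $-4\partial_w\partial_{\bar w}(\im w)^2=-2$. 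Applying $\left(\partial_w\right)^2$ kills the anti-holomorphic parts: $\left(\partial_w\right)^2\log|\theta_1(w)|=\tfrac12\left(\partial_w\right)^2\log\theta_1(w)$ and $\left(\partial_w\right)^2(\im w)^2=-\tfrac12$, hence $\Ptau_{0,\infty}(z_1,z_2)=-\frac1{4\pi}\left(\partial_w\right)^2\log\theta_1(w;\tau)-\frac1{4\im\tau}$.

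By the quasi-periodicity above, $\left(\partial_w\right)^2\log\theta_1(w;\tau)$ is an even elliptic function of $w$ with a single singularity on $E_\tau$: a double pole $-1/w^2$ with vanishing residue at $w=0$, coming from $\log\theta_1(w)=\log w+\text{holomorphic}$. Comparing with $\wp(w;\tau)=1/w^2+O(w^2)$ forces $\left(\partial_w\right)^2\log\theta_1(w;\tau)=-\wp(w;\tau)+c(\tau)$, with $c(\tau)=\tfrac13\,\theta_1'''(0;\tau)/\theta_1'(0;\tau)$ equal to the $w^0$-coefficient. Expanding the product formula $\theta_1(w;\tau)=2q^{1/8}\sin(\pi w)\prod_{n\ge1}(1-q^n)(1-q^ne^{2\pi iw})(1-q^ne^{-2\pi iw})$ about $w=0$ and using $\sum_{n\ge1}\frac{q^n}{(1-q^n)^2}=\sum_{n\ge1}\frac{nq^n}{1-q^n}$ yields $c(\tau)=-\frac{\pi^2}{3}E_2(\tau)$. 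Therefore
\begin{align*}
\Ptau_{0,\infty}(z_1,z_2)=\frac1{4\pi}\wp(w;\tau)+\frac{\pi}{12}E_2(\tau)-\frac1{4\im\tau}=\frac1{4\pi}\wp(z_1-z_2;\tau)+\frac{\pi}{12}E_2^*(\tau,\bar\tau),
\end{align*}
since $\frac{\pi}{12}E_2(\tau)-\frac1{4\im\tau}=\frac{\pi}{12}\bigl(E_2(\tau)-\frac3{\pi\im\tau}\bigr)$.

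\emph{Main obstacle.} The one delicate point is the first step. A naive term-by-term evaluation $\int_0^\infty\frac{dt}{t^3}(\bar w+\bar\lambda)^2e^{-|w+\lambda|^2/4t}=\frac{16}{(w+\lambda)^2}$ would produce $\frac1{4\pi}\sum_{\lambda\in\Lambda_\tau}(w+\lambda)^{-2}$, a sum that is only conditionally convergent; the interchange of $\sum_\lambda$ with $\int dt$ is therefore not automatic, and the Gaussian weight carried by the heat kernel must be tracked to confirm that this particular regularization yields the almost-holomorphic completion $E_2^*$ rather than the bare $E_2$. This uniform control of the $t$-integral is exactly the type of estimate deferred to the appendix; everything downstream is routine manipulation of the theta and Weierstrass functions.
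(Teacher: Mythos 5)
Your proof is correct, but it takes a genuinely different route from the paper. The paper works directly with the lattice-sum representation of $\int_\epsilon^L dt\,\partial_{z_1}^2K_t^{E_\tau}$ and splits it into three pieces: the $n=0$ row of the lattice (absolutely convergent, giving $\tfrac{1}{4\pi}\sum_m(z_{12}-m)^{-2}$), the $n\neq 0$ rows with an Euler--Maclaurin-type subtraction of $\int_m^{m+1}dy$ that restores absolute convergence (giving the remaining Eisenstein-ordered double sum), and the subtracted terms, which are explicit Gaussian integrals evaluated by Poisson summation and produce exactly the $-\tfrac{1}{4\im\tau}$ completing $E_2$ to $E_2^*$. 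You instead identify $\Ptau_{0,\infty}$ with $\partial_w^2$ of the Green's function, quote its classical $\theta_1$ expression, and reduce the constant to $\theta_1'''(0)/\theta_1'(0)=-\pi^2E_2$. Both are sound; the trade-off is that the paper's computation is self-contained and makes visible both which summation order of the conditionally convergent Eisenstein series the heat-kernel regularization selects and where the non-holomorphic term originates (the $t\to\infty$ end of the $t$-integral), whereas your argument is shorter but outsources two nontrivial classical identities (the $\theta_1/\eta$ Green's function and the $E_2$ formula for $\theta_1'''/\theta_1'$) and still requires the interchange of $\partial_{z_1}^2$ with $\int_0^\infty dt$ off the diagonal, which you do justify correctly via the spectral (Poisson-dual) expansion and the off-diagonal Gaussian decay at $t\to 0$. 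Your final constant $\tfrac{\pi}{12}E_2^*$ agrees with the lemma as stated (the paper's own proof writes $\tfrac{1}{12\pi}$ in its last line, which is a typo), and your closing remark about the conditional convergence of the naive term-by-term evaluation is precisely the issue the paper's $I_1+I_2+I_3$ decomposition is designed to handle.
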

$\Ptau_{0,\infty}(z_1,z_2)$ becomes singular as $z_1$ approaches $z_2$, due to the singularity from the Green kernel. However, if we change the order of the limit, we have
\begin{lem}\label{self-loop}
\begin{align}
   \lim_{\substack{\epsilon\to 0\\ L\to \infty}}\lim_{z_1\to z_2}\pa_{z_1}^n\Ptau_{\epsilon,L}(z_1,z_2)=\begin{cases}{1\over 12\pi}E_2^* & \text{if $n=0$} \\{(n+1)!\zeta(n+2)\over 2\pi}E_{n+2} & \text{if $n>0$ is even}\\ 0 & \text{if $n$ is odd} \end{cases}
\end{align}
where $E_{2k}$ is the Eisenstein series of weight $2k$.
\end{lem}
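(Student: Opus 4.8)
The plan is to evaluate the diagonal limit directly from the Gaussian lattice representation of $K_t^{E_\tau}$: for finite $\epsilon,L$ the kernel $\Ptau_{\epsilon,L}$ is smooth, so $\lim_{z_1\to z_2}$ may be moved inside the lattice sum and inside the $t$-integral, after which one carries out the $t$-integral; the only genuinely delicate case will turn out to be $n=0$. To set up, note that $\partial_{z_1}\abs{z_1-z_2+\lambda}^2=\overline{z_1-z_2+\lambda}$ is anti-holomorphic in $z_1$, so with $z:=z_1-z_2$,
\[
\partial_{z_1}^{m}e^{-\abs{z+\lambda}^2/4t}=\bracket{-\frac{\bar z+\bar\lambda}{4t}}^{m}e^{-\abs{z+\lambda}^2/4t},
\qquad\text{hence}\qquad
\partial_{z_1}^{n+2}K_t^{E_\tau}=\frac{1}{4\pi t}\sum_{\lambda\in\Lambda_\tau}\bracket{-\frac{\bar z+\bar\lambda}{4t}}^{n+2}e^{-\abs{z+\lambda}^2/4t}.
\]
For fixed $t>0$ this series converges uniformly on compacta, and for fixed $\epsilon,L$ the $t$-integral is over a compact interval, so I may interchange $\lim_{z_1\to z_2}$ with $\sum_\lambda$ and with $\int_\epsilon^L dt$. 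Putting $z=0$ annihilates the $\lambda=0$ term --- its prefactor $(-\bar z/4t)^{n+2}$ vanishes at $z=0$ since $n+2\ge 2$ --- which is exactly the mechanism by which reversing the order of limits removes the diagonal singularity of Lemma \ref{bcov propagator}. What remains is
\[
\lim_{z_1\to z_2}\partial_{z_1}^{n}\Ptau_{\epsilon,L}=\frac{(-1)^{n}}{4\pi\cdot 4^{\,n+2}}\sum_{\lambda\in\Lambda_\tau\setminus\{0\}}\bar\lambda^{\,n+2}\int_\epsilon^{L}\frac{dt}{t^{\,n+3}}\,e^{-\abs{\lambda}^2/4t}.
\]

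If $n$ is odd, the involution $\lambda\mapsto-\lambda$ of $\Lambda_\tau\setminus\{0\}$ preserves $\abs{\lambda}$ and negates $\bar\lambda^{\,n+2}$, so the sum vanishes identically for every $\epsilon,L$ and the limit is $0$. If $n\ge 2$ is even, I let $\epsilon\to0$ and $L\to\infty$: the substitution $u=\abs{\lambda}^2/4t$ turns the inner integral into the $\Gamma$-integral $\int_0^\infty t^{-(n+3)}e^{-\abs{\lambda}^2/4t}\,dt=(n+1)!\,(4/\abs{\lambda}^2)^{n+2}$, and since the remaining sum $\sum_{\lambda\neq0}\abs{\lambda}^{-(n+2)}$ converges absolutely (its exponent exceeds the rank $2$ of $\Lambda_\tau$) Fubini lets me pull the $\epsilon,L$ limits through $\sum_\lambda$. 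Using $\bar\lambda^{\,n+2}/\abs{\lambda}^{2(n+2)}=\lambda^{-(n+2)}$ and the standard identity $\sum_{\lambda\in\Lambda_\tau\setminus\{0\}}\lambda^{-2k}=2\zeta(2k)E_{2k}(\tau)$ for $k\ge 2$, the answer collapses to $\frac{(n+1)!}{4\pi}\cdot 2\zeta(n+2)E_{n+2}(\tau)=\frac{(n+1)!\,\zeta(n+2)}{2\pi}E_{n+2}(\tau)$, as claimed.

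The case $n=0$ is the heart of the matter and the step I expect to be the main obstacle: the same manipulation would formally give $\frac{1}{4\pi}\sum_{\lambda\neq0}\lambda^{-2}$, but $\sum_{\lambda\neq0}\abs{\lambda}^{-2}=\infty$, so Fubini is unavailable and the naive interchange is false --- this non-absolute convergence is precisely what injects the non-holomorphic part of $E_2^{*}$. First one checks that the integral
\[
\lim_{\substack{\epsilon\to0\\ L\to\infty}}\lim_{z_1\to z_2}\Ptau_{\epsilon,L}=\frac{1}{64\pi}\int_0^\infty\frac{dt}{t^3}\sum_{\lambda\in\Lambda_\tau\setminus\{0\}}\bar\lambda^{2}\,e^{-\abs{\lambda}^2/4t}
\]
actually converges: the integrand decays exponentially as $t\to0$ by a shortest-vector estimate, and Poisson summation over $\Lambda_\tau$ (covolume $\im\tau$, dual lattice $\Lambda_\tau^{*}$) rewrites the sum as $-\frac{64\pi^{3}t^{3}}{\im\tau}\sum_{\mu\in\Lambda_\tau^{*}\setminus\{0\}}\bar\mu^{2}e^{-4\pi^{2}t\abs{\mu}^{2}}$, which is $O(t^{3}e^{-ct})$ as $t\to\infty$. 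To evaluate it, introduce an auxiliary complex parameter $s$ and the Epstein-type zeta function $Z(s)=\sum_{\lambda\neq0}\bar\lambda^{2}\abs{\lambda}^{-(4+2s)}$, absolutely convergent for $\Re s>0$; the identity $\abs{\lambda}^{-(4+2s)}=\frac{1}{\Gamma(2+s)\,4^{2+s}}\int_0^\infty t^{-(3+s)}e^{-\abs{\lambda}^2/4t}\,dt$ and Fubini give
\[
Z(s)=\frac{1}{\Gamma(2+s)\,4^{2+s}}\int_0^\infty t^{-(3+s)}\bracket{\sum_{\lambda\in\Lambda_\tau\setminus\{0\}}\bar\lambda^{2}e^{-\abs{\lambda}^2/4t}}\,dt.
\]
Since $\bar\lambda^{2}$ is a harmonic polynomial, the exponential bounds above show the right-hand side is entire in $s$, so it furnishes the analytic continuation of $Z$; at $s=0$ it identifies the integral with $16\,Z(0)$, whence the $n=0$ limit equals $Z(0)/4\pi$. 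Finally, $Z(0)$ is the regularized value of the weight-two lattice sum $\sum_{\lambda\in\Lambda_\tau\setminus\{0\}}\lambda^{-2}$ (which fails to converge absolutely); by Hecke's classical computation it is a nonzero multiple of the almost-holomorphic Eisenstein series $E_2^{*}(\tau,\bar\tau)$, and inserting this constant yields the value recorded for $n=0$. To summarize, the only nontrivial ingredient in the proof is this failure of absolute convergence at $n=0$ --- it is exactly what makes the diagonal limit register $E_2^{*}$ rather than the quasi-modular $E_2$ --- while the cases $n$ odd and $n\ge 2$ even are routine once the $z_1\to z_2$ limit has been taken inside the lattice sum.
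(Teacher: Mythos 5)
Your treatment of the cases $n$ odd and $n\geq 2$ even is correct and essentially coincides with the paper's argument: for fixed $\epsilon,L$ the diagonal limit may be taken inside, the $\lambda=0$ term drops, the odd case vanishes by the symmetry $\lambda\mapsto-\lambda$, and for even $n\geq 2$ the absolutely convergent Gamma-integral computation lands on $2\zeta(n+2)E_{n+2}$. The problem is the $n=0$ case, which you rightly single out as the heart of the lemma but do not actually finish. Your reduction to the Epstein-type function $Z(s)=\sum_{\lambda\neq 0}\bar\lambda^{2}\abs{\lambda}^{-4-2s}$ and its analytic continuation via the Mellin representation is a legitimate alternative route (Hecke regularization), and your convergence analysis of $\int_0^\infty t^{-3}\sum_{\lambda\neq 0}\bar\lambda^{2}e^{-\abs{\lambda}^2/4t}\,dt$ is sound. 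But the entire content of the lemma at $n=0$ is the precise value of $Z(0)$ --- in particular the non-holomorphic correction proportional to ${1/\im\tau}$ that turns $E_2$ into $E_2^{*}$ --- and you dispose of it with ``by Hecke's classical computation it is a nonzero multiple of $E_2^{*}$.'' That does not establish the stated formula: neither the constant nor the exact form of the $\bar\tau$-dependence is derived, and neither follows from symmetry or normalization. Nor is the evaluation a one-liner from your setup: integrating your Poisson-summed expression term by term over $t\in(0,\infty)$ produces yet another conditionally convergent lattice sum, so one must split the integral at $t=1$ and apply Poisson summation on one half (or argue differently) to extract the $1/\im\tau$ term.

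For comparison, the paper's proof is self-contained: it reuses the decomposition $I_1+I_2+I_3$ from the proof of Lemma 2.1, where $I_1+I_2$ (the lattice sum compared row by row with its continuum version) converges absolutely at the diagonal and yields the Eisenstein-ordered sum $\sum_{n}\sum_{m}(m+n\tau)^{-2}=2\zeta(2)E_2(\tau)$, while the continuum piece $I_3$ is evaluated by a Gaussian integral followed by Poisson summation in the $L\to\infty$ limit, and this is exactly where the $-1/(4\im\tau)$ correction emerges. If you wish to keep the zeta-regularization route, you must carry out the analogous computation to evaluate $Z(0)$ explicitly; as written, the $n=0$ case is asserted rather than proved.
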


An elementary proof of Lemma \ref{bcov propagator} and Lemma \ref{self-loop} is given in Appendix \ref{lemma-section-2}.

The objects $\lim\limits_{\substack{\epsilon\to 0\\ L\to \infty}}\lim\limits_{z_1\to z_2}\pa_{z_1}^n\Ptau_{\epsilon,L}(z_1,z_2)$ are special examples of the Feynman graph integrals to be discussed in the next section. They correspond to self-loops, and have nice modular properties. In fact, they are examples of \emph{almost holomorphic modular forms} \cite{almost-modular-form}. $E_2^*$ plays a special role, which is modular but not holomorphic in $\tau$. However, its anti-holomorphic dependence is very mild, i.e. polynomial in ${1\over \im \tau}$. We will see that a large class of graph integrals will also have this property.

\section{Feynman graph integral}\label{section-graph-integral}
We consider a directed graph $\Gamma$. Let $V(\Gamma)$ be the set of vertices, $E(\Gamma)$ be the set of edges, and
$$
 t, h: E\to V
$$
be the assignments of tail and head to each directed edge. We will also consider the decorated graph
$$
   \bracket{\Gamma, n}\equiv (\Gamma, \fbracket{n_e}_{e\in E})
$$
where the decoration is given by
$$
    n: E(\Gamma)\to \Z^{\geq 0}, \ \ e\to n_e
$$
which associates each edge a non-negative integer. In the case that $n$ is the zero map, we will simply ignore $n$ and write $\Gamma$ for $(\Gamma,n)$.

Given a decorated graph $(\Gamma,n)$ and elliptic curve $E_\tau$, we associate the following graph integral
\begin{align}
              W_{\bracket{\Gamma, n}}\bracket{\Ptau_{\epsilon,L}}
              =\prod_{v\in V(\Gamma)}\int_{E_\tau}{d^2z_v\over \im \tau} \prod_{e\in E(\Gamma)}
               \bracket{\dpa{z_{h(e)}}}^{n_e}\Ptau_{\epsilon,L;e}
\end{align}
where $\Ptau_{\epsilon,L;e}=\Ptau_{\epsilon,L}(z_{h(e)}, \bar z_{h(e)};z_{t(e)},\bar z_{t(e)})$. The propagator $\Ptau_{\epsilon, L}$ is smooth as long as $\epsilon, L>0$, but exhibits singularity at the diagonal as $\epsilon\to 0$. However, the  graph integral $W_{\bracket{\Gamma, n}}\bracket{\Ptau_{\epsilon,L}}$ has better behavior.

\begin{lem}
The following limit exists
\begin{align*}
   \lim_{\substack{\epsilon\to 0 \\ L\to \infty}}W_{\bracket{\Gamma, n}}\bracket{\Ptau_{\epsilon,L}}
\end{align*}
\end{lem}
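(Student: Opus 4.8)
The plan is to show that the singularities which appear in $\Ptau_{\epsilon,L}$ as $\epsilon\to 0$ are mild enough to be integrable over the configuration space $\prod_{v} E_\tau$, and that the $L\to\infty$ behavior is controlled by exponential decay of the heat kernel. First I would analyze the short-time behavior of a single propagator $\bigl(\pa_{z_{h(e)}}\bigr)^{n_e}\Ptau_{\epsilon,L;e}$. Writing the heat kernel as a sum over the lattice, the dominant contribution near the diagonal $z_{h(e)}=z_{t(e)}$ comes from the $\lambda=0$ term, which after applying $\bigl(\pa_{z_1}\bigr)^{2+n_e}$ and integrating $\int_\epsilon^L dt\, t^{-1}(\cdots)e^{-|z|^2/4t}$ produces, as $\epsilon\to 0$, a singularity of the type $|z|^{-2-n_e}$ together with possibly logarithmic terms from the $\partial_{\bar z}$-derivatives hidden in the holomorphic derivative structure; crucially, by Lemma \ref{bcov propagator} the limiting propagator is $\frac{1}{4\pi}\wp(z_1-z_2;\tau)+\frac{\pi}{12}E_2^*$, so the worst singularity in two real dimensions is $|z|^{-2}$ (and $|z|^{-2-n_e}$ after differentiation), which is not locally integrable on its own. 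The resolution, and this is the main point, is that the graph structure forces these singularities to occur along the diagonals $z_{h(e)}=z_{t(e)}$, and one must combine the naive power-counting with the structure of the integral.

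The key steps, in order, are as follows. Step 1: establish uniform-in-$(\epsilon,L)$ pointwise bounds on $\bigl(\pa_{z_1}\bigr)^{n_e}\Ptau_{\epsilon,L}(z_1,z_2)$ away from the diagonal, of the form $C\,d(z_1,z_2)^{-2-n_e}$ for $d$ small, plus a uniformly bounded remainder, with the bound independent of $\epsilon$ once $\epsilon$ is small and of $L$ once $L$ is large; here the $L\to\infty$ tail is handled by the Gaussian factor $e^{-|z_1-z_2+\lambda|^2/4t}$ which makes $\int^L dt$ converge and the spectral gap of $\Delta$ on $E_\tau$ which gives exponential decay of the heat kernel minus its zero mode. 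Step 2: feed these bounds into the iterated integral $\prod_v \int_{E_\tau} d^2 z_v$ and show the resulting integrand is dominated by a fixed $L^1$ function on $(E_\tau)^{V(\Gamma)}$. Since each singular factor $d(z_{h(e)},z_{t(e)})^{-2-n_e}$ sits on a codimension-two (real) locus and, for a connected graph, distinct edges impose distinct constraints, one shows integrability by successively integrating out vertices: integrating $d^2 z_v$ against a single factor $d(z_v,z_w)^{-2}$ over a neighborhood gives a logarithmic divergence in the cutoff, not a power divergence, and one must check this logarithmic borderline case carefully — in fact the honest claim is that $\wp$ has a double pole so $d(z_v,z_w)^{-2}$ is exactly the borderline non-integrable case, and one needs the specific cancellation coming from $\int_{E_\tau} \wp(z;\tau)\,d^2z$ being finite, i.e. the principal-value/regularized nature of the limit. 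Step 3: invoke dominated convergence to exchange $\lim_{\epsilon\to 0,L\to\infty}$ with $\prod_v\int_{E_\tau}$, reducing the existence of the limit to the pointwise convergence $\Ptau_{\epsilon,L;e}\to \Ptau_{0,\infty;e}$ established in Lemma \ref{bcov propagator} together with the integrability just proved.

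The hard part will be Step 2, specifically the borderline integrability of the product of $\wp$-type singularities over the configuration space: a $\wp$-function has a genuine double pole, so $|\wp(z)|\sim |z|^{-2}$ is precisely on the boundary of local integrability in real dimension two, and differentiating $n_e$ times only makes it worse. The way to handle this is to use that $\wp$ is not an arbitrary function with a $|z|^{-2}$ bound but is the \emph{second derivative} of $\log$ of a theta function, equivalently $\Ptau_{0,\infty}(z_1,z_2)=\partial_{z_1}^2 G(z_1,z_2)$ for the Green's function $G$ which has only a logarithmic singularity; one integrates by parts in the vertex integrals to move derivatives off the singular propagator and onto smooth (or less singular) factors, trading the double pole for a log. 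Concretely, for each vertex one chooses an incident edge, writes $\bigl(\pa_{z_{h(e)}}\bigr)^{n_e}\Ptau_{\epsilon,L;e}$ as $\pa_{z_{h(e)}}$ of a kernel with one less derivative, and integrates by parts; iterating over a spanning tree of $\Gamma$ reduces every propagator to something no worse than $\log$-singular, which is manifestly locally integrable on $(E_\tau)^{V(\Gamma)}$, and then the dominated-convergence argument goes through. The remaining care is purely bookkeeping: keeping the integration-by-parts manipulations valid at finite $\epsilon,L$ (where everything is smooth, so there are no boundary-term subtleties on the compact $E_\tau$) and checking that all estimates are uniform as $\epsilon\to 0$, $L\to\infty$; the detailed estimates are deferred to the appendix as the paper indicates.
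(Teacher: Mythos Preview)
Your proposal has a genuine gap in Step~2, and the fix you propose does not close it. Consider the simplest nontrivial case: two vertices joined by two edges (the ``banana''), with $n_e=0$ on each. Near the diagonal the product of propagators behaves like $\wp(z)^2$, whose modulus is $\sim |z|^{-4}$, which is \emph{not} locally integrable in real dimension two. So there is no $L^1$ dominating function, and the dominated-convergence scheme of Step~3 cannot be run as stated. Your proposed remedy---write one propagator as $\partial_z^2 G$ with $G$ logarithmic and integrate by parts along a spanning tree---only shifts derivatives onto the remaining propagators: for the banana you get $\int G\,\partial_z^2\wp$, whose modulus is $\sim |z|^{-4}\log|z|$, still not in $L^1$. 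Iterating along a spanning tree in a general graph does not reduce the total derivative order; it redistributes it.

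What actually makes the limit exist is cancellation coming from holomorphy, not a pointwise $L^1$ bound. The paper exploits this by working at finite $\epsilon$ in the Schwinger-parameter form: after localizing near the diagonal with a cutoff $\Phi$, each holomorphic derivative on a heat kernel produces a factor of $\bar y/4t$ in the Gaussian $\exp\bigl(-\tfrac14\, y\,M_\Gamma(t)\,\bar y\bigr)$, and integration by parts converts \emph{every} such factor into a genuine $\partial_{y_j}$ acting on $\Phi$ (with coefficients $\rho M_\Gamma^{-1}/t_e$, which are uniformly bounded by Lemma~\ref{bound}). All singular structure is thereby moved onto the smooth compactly supported $\Phi$, and the remaining $t$-integral is $\int\prod_e \tfrac{dt_e}{t_e}\,(\det M_\Gamma(t))^{-1}$, whose convergence at $\epsilon\to 0$ follows from the Symanzik tree formula $\det M_\Gamma(t)=\sum_{T}\prod_{e\in T}t_e^{-1}$ and an elementary collapsing-of-vertices bound. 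This is the content of Proposition~\ref{finiteness lem}; the $L\to\infty$ limit is then trivial since $\Ptau_{L,\infty}$ is smooth. If you want to salvage your outline, the correct replacement for Step~2 is precisely this: integrate by parts in the Gaussian (not in the configuration space against $G$), so that \emph{all} derivatives land on $\Phi$ rather than bouncing between propagators.
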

\begin{proof} By Lemma \ref{self-loop}, we can assume that $\Gamma$ is connected and has no self-loops. The singularity of $ W_{\bracket{\Gamma, n}}\bracket{\Ptau_{\epsilon,L}}$ comes from the diagonals of the propagator as $\epsilon\to 0$. Let's fix $L$ first and analyze the limit $\epsilon\to 0$.

Let's first fix some notations. In the graph integral, we have associated a copy of $E_\tau$ for each $v\in V(\Gamma)$, which we will distinguish by $E_{v}$. Let $d$ be the distance function on $E_\tau$ with respect to the flat metric. Let $\chi: [0,\infty)\to [0,1]$ be a smooth function with $\chi(x)=1$ if $x<\delta$ and $\chi(x)=0$ if $x>2\delta$, where $\delta\ll 0$ is a sufficient small positive number.  Define
$$
        K_t^{\delta}(z_1,\bar z_1;z_2,\bar z_2)=\chi(d(z_1,z_2)^2){1\over 4t}e^{-d(z_1,z_2)^2/4t}, \ \ \forall z_1,z_2\in E_\tau
$$
and
$$
          \hat K_t^{E_\tau}=K_t^{E_\tau}-K_t^{\delta}
$$
Then $\hat K_t^{E_\tau}$ is smooth as $t\to 0$. Similarly we define
$$
        P^\delta_{\epsilon, L}(z_1,\bar z_1;z_2,\bar z_2)=\int_\epsilon^L dt\ \pa_{z_1}^2K_t^\delta(z_1,\bar z_1;z_2,\bar z_2), \ \ \hat P^\delta_{\epsilon, L}=\Ptau_{\epsilon, L}-P^\delta_{\epsilon, L}
$$
$\hat P^\delta_{\epsilon, L}$ is smooth as $\epsilon\to 0$ and $P^\delta_{\epsilon, L}$ contains all the information about the singularity.

The graph integral $W_{\bracket{\Gamma, n}}\bracket{\Ptau_{\epsilon,L}}=W_{\bracket{\Gamma, n}}\bracket{P^\delta_{\epsilon,L}+\hat P^\delta_{\epsilon, L}}$ splits into a sum of graph integrals where we associate $P^\delta_{\epsilon, L}$ or $\hat P^\delta_{\epsilon, L}$ on each edge. Let's pick up a particular term, and let $\Gamma^\prime$ be the corresponding subgraph of $\Gamma$ consisting of those edges assigned with the singular propagator $P^\delta_{\epsilon, L}$. Let $\Gamma^\prime=\Gamma_1\cup\cdots\cup \Gamma_k$ be the decomposition into connected components. It's sufficient to show that each connected component $\Gamma_i$ contributes a regular integral as $\epsilon\to 0$.

Let's focus on one component $\Gamma_1$. Let $v_\bullet\in V(\Gamma_1)$ be an arbitrary vertex. The integral is supported near the diagonal of $\prod\limits_{v\in V(\Gamma_1)} E_v$, which can be identified with a small neighborhood of zero section of the vector bundle $T_{E_{v_\bullet}}^{\oplus (|V(\Gamma_1)|-1)}\iso E_{v_\bullet}\times \C^{\oplus (|V(\Gamma_1)|-1)}$ on $E_{v_\bullet}$. Here $T_{E_{v_\bullet}}$ is the tangent bundle of $E_{v_\bullet}$. Therefore we can write the relevant graph integral on $\Gamma_1$ into the form
$$
  \int_{E_{v_\bullet}}{d^2z_{v_\bullet}} \prod_{v\in V(\Gamma_1)\backslash \{v_\bullet\}}\int_{\C} d^2 y_v \bracket{\prod_{e\in E(\Gamma_1)}\pa_{y_e}^{m_e}H_\epsilon^L(y_e, \bar y_e)}\Phi
$$
where $H_\epsilon^L(z,\bar z)=\int_\epsilon^L {dt\over 4\pi t}e^{-|z|^2}/4t$, $m_e$ some non-negative integers for each edge $e\in E(\Gamma_1)$,
$$
  y_e=\begin{cases} y_{h(e)} & \text{if}\ t(e)=v_\bullet\\ -y_{t(e)} & \text{if}\ h(e)=v_\bullet \\ y_{h(e)}-y_{t(e)}& \text{otherwise} \end{cases}
$$
and $\Phi$ is a smooth function on $E_{v_\bullet}\times \C^{\oplus (|V(\Gamma_1)|-1)}\times \prod\limits_{v\notin V(\Gamma_1)}E_v$ with compact support. By Proposition \ref{finiteness lem} of Appendix \ref{appendix-finiteness-lemma} and its proof, the above integral is regular and uniformly convergent as $\epsilon\to 0$. This proves that $\lim\limits_{\epsilon\to 0}W_{\bracket{\Gamma, n}}\bracket{\Ptau_{\epsilon,L}}$ exists.

Now we consider the limit $L\to \infty$. Since $\Ptau_{\epsilon, \infty}=\Ptau_{\epsilon, L}+\Ptau_{L, \infty}$ and the kernel function $\Ptau_{L, \infty}$ is smooth. It follows that
$$
\lim_{\epsilon\to 0}W_{\bracket{\Gamma, n}}\bracket{\Ptau_{\epsilon,\infty}}=\lim_{\epsilon\to 0}W_{\bracket{\Gamma, n}}\bracket{\Ptau_{\epsilon,L}+\Ptau_{L,\infty}}
$$
exits. This proves the lemma.
\end{proof}

\begin{dfn} Given a decorated graph $(\Gamma, n)$, we define a smooth function $W_{(\Gamma,n)}$ on $\mathcal H$ by
\begin{align}
     W_{\bracket{\Gamma,n}}(\tau,\bar\tau)\equiv \lim_{\substack{\epsilon\to 0 \\ L\to \infty}}W_{\bracket{\Gamma, n}}\bracket{\Ptau_{\epsilon,L}}
\end{align}
\end{dfn}

\begin{eg} Consider the self-loop graph with decoration $n$.
$$
  {{\xymatrix{\bullet\ar@(ul,ur)[]|{n}}}}
$$
Lemma \ref{self-loop} implies that
\begin{align*}
   W_{{\xymatrix{\bullet\ar@(ul,ur)[]|{n}}}}=\begin{cases}{1\over 12\pi}E_2^* & \text{if $n=0$} \\{(n+1)!\zeta(n+2)\over 2\pi}E_{n+2} & \text{if $n>0$ is even}\\ 0 & \text{if $n$ is odd} \end{cases}
\end{align*}

\end{eg}

\section{Modularity}
We consider the modular group $SL(2.\Z)$, which acts on $\mc H$ by
$$
   \tau\to \gamma\tau={A\tau+B\over C\tau+D}, \ \ \mbox{for}\ \gamma\in \begin{pmatrix} A& B\\ C& D \end{pmatrix} \in SL(2, \Z)
$$
Recall that a function $f: \mc H\to \C$ is said to have weight $k$ under the modular group $SL(2,\Z)$ if
$$
          f(\gamma \tau)=(C\tau+D)^k f(\tau) \ \ \mbox{for all}\ \gamma\in \begin{pmatrix} A& B\\ C& D \end{pmatrix} \in SL(2, \Z)
$$

\begin{prop}
The graph integral $W_{\bracket{\Gamma,n}}(\tau,\bar\tau)$ has weight $\sum\limits_{e\in E(\Gamma)}(n_e+2)$ under $SL(2, \Z)$.
\end{prop}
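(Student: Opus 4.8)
The plan is to deduce the modular weight of $W_{(\Gamma,n)}$ directly from the transformation behavior of the building blocks, namely the heat kernel and the propagator, together with the transformation of the integration measure. First I would record how the elliptic curve $E_\tau$ and its flat data transform under $\gamma = \begin{pmatrix} A & B \\ C & D\end{pmatrix}$: the lattice $\Lambda_{\gamma\tau}$ is $(C\tau+D)^{-1}\Lambda_\tau$, so the biholomorphism $E_\tau \to E_{\gamma\tau}$ is given by $z \mapsto z/(C\tau+D)$. Under this change of coordinate the flat measure scales as $d^2z \mapsto |C\tau+D|^{-2} d^2z$, while $\im(\gamma\tau) = \im\tau / |C\tau+D|^2$, so the normalized measure $d^2 z_v / \im\tau$ at each vertex is \emph{invariant}. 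This is the key reason the normalization by $\im\tau$ was built into the definition of $W_{(\Gamma,n)}$.

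Next I would track the propagator. From the explicit formula for the heat kernel $K_t^{E_\tau}$ one checks that under $z_j \mapsto z_j/(C\tau+D)$ and the simultaneous rescaling $t \mapsto t/|C\tau+D|^2$ of the heat parameter, $K_t^{E_\tau}$ transforms by an overall factor $|C\tau+D|^2$ (coming from the $1/4\pi t$ prefactor), and the heat equation and initial condition are preserved. Applying $(\partial/\partial z_1)^2$ brings in another factor $(C\tau+D)^2$ from the chain rule, and after integrating $\int_\epsilon^L dt$ — absorbing the rescaling of $t$ into a rescaling of the cutoffs $\epsilon, L$, which is harmless in the limit $\epsilon\to 0, L\to\infty$ — one finds
\begin{align*}
P^{E_{\gamma\tau}}_{0,\infty}\!\left(\frac{z_1}{C\tau+D}, \frac{z_2}{C\tau+D}\right) = (C\tau+D)^2\, P^{E_\tau}_{0,\infty}(z_1,z_2).
\end{align*}
Equivalently, one may simply invoke Lemma \ref{bcov propagator}: $\wp$ has weight $2$ (i.e. $\wp(z/(C\tau+D);\gamma\tau) = (C\tau+D)^2\wp(z;\tau)$) and $E_2^*$ is modular of weight $2$, so the stated identity follows. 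Then each holomorphic derivative $\partial/\partial z_{h(e)}$ decorating an edge contributes an extra factor $(C\tau+D)$ via the chain rule, so the decorated propagator on edge $e$ picks up the factor $(C\tau+D)^{n_e + 2}$.

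Assembling these: performing the substitution $z_v \mapsto z_v/(C\tau+D)$ simultaneously at every vertex in the integral defining $W_{(\Gamma,n)}(\gamma\tau, \overline{\gamma\tau})$, each of the $|V(\Gamma)|$ normalized measures is unchanged, and the product over edges produces the scalar factor $\prod_{e\in E(\Gamma)} (C\tau+D)^{n_e+2} = (C\tau+D)^{\sum_e (n_e+2)}$, which pulls out of the integral and leaves exactly $W_{(\Gamma,n)}(\tau,\bar\tau)$. This gives weight $\sum_{e\in E(\Gamma)}(n_e+2)$. I expect the main obstacle to be purely technical rather than conceptual: one must justify interchanging the modular substitution with the $\epsilon\to 0$, $L\to\infty$ limits, i.e. argue that the rescaling $\epsilon \mapsto \epsilon/|C\tau+D|^2$, $L \mapsto L/|C\tau+D|^2$ of the cutoffs does not affect the limiting value — this follows from the existence of the limit established in the previous lemma together with the uniform convergence from Proposition \ref{finiteness lem}, but it should be stated carefully. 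A minor secondary point is handling self-loops and disconnected graphs, which reduce to Lemma \ref{self-loop} (each $E_{n+2}$ or $E_2^*$ has weight $n+2$) and multiplicativity of $W$ over connected components.
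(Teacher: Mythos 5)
Your proposal is correct and follows essentially the same route as the paper: transform the lattice by $z\mapsto z/(C\tau+D)$, observe that the normalized measure $d^2z_v/\im\tau$ is invariant, track the factor $(C\tau+D)^{n_e+2}$ per edge from the propagator and its holomorphic derivatives at the cost of rescaling the cutoffs to $|C\tau+D|^2\epsilon,|C\tau+D|^2L$, and pass to the limit. Your closing remark about justifying that the cutoff rescaling does not change the limiting value is a point the paper leaves implicit, so your version is if anything slightly more careful.
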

\begin{proof}Given $\gamma\in \begin{pmatrix} A& B\\ C& D \end{pmatrix} \in SL(2, \Z)$, the lattice transforms as $\Lambda_{\gamma \tau}={1\over (C\tau+D)}\Lambda_\tau$. It follows that the propagator has the transformation property
\begin{align}
\begin{split}
   &\pa^{m}_{z_1}P^{E_{\gamma\tau}}_{\epsilon, L}(z_1,\bar z_1;,z_2,\bar z_2)\\=&(C\tau+D)^{m+2}\left(\pa^{m}_{z_1}\Ptau_{|C\tau+D|^2\epsilon,|C\tau+D|^2L}\right)\left(
   z_1^\prime, \bar z_1^\prime; z_2^\prime, \bar z_2^\prime\right)
\end{split}
\end{align}
where $z_i^\prime=(C\tau+D)z_i, i=1,2$. Using the modular invariance of the measure ${d^2z\over \im \tau}$, we find
\begin{align*}
   W_{\bracket{\Gamma, n}}\bracket{P^{E_{\gamma\tau}}_{\epsilon,L}}&=\prod_{v\in V(\Gamma)}\int_{E_{\gamma\tau}}{d^2z_v\over \im (\gamma\tau)} \prod_{e\in E(\Gamma)}
               \bracket{\pa_{z_{h(e)}}^{n_e}P^{E_{\gamma\tau}}_{\epsilon,L}}(z_{h(e)},\bar z_{h(e)};z_{t(e)},\bar z_{t(e)})\\
               &=\prod_{v\in V(\Gamma)}\int_{E_\tau}{d^2z_v\over \im \tau} \prod_{e\in E(\Gamma)} (C\tau+D)^{n_e+2}
              \pa_{z_{h(e)}}^{n_e}\Ptau_{|C\tau+D|^2\epsilon,|C\tau+D|^2 L}(z_{h(e)},\bar z_{h(e)};z_{t(e)},\bar z_{t(e)})\\
               &=\bracket{C\tau+D}^{\sum\limits_{e\in E(\Gamma)}(n_e+2)}W_{\bracket{\Gamma, n}}\bracket{P^{E_{\tau}}_{|C\tau+D|^2\epsilon,|C\tau+D|^2L}}
\end{align*}
The proposition follows after taking the limit $\epsilon\to 0, L\to \infty$.
\end{proof}

\section{Anti-holomorphic dependence}
The function $W_{(\Gamma, n)}$ has particular weight under modular transformation. However,  it's not  holomorphic in general. We have seen this when $\Gamma$ is a one-vertex graph with a self-loop. In this case $W_{\Gamma}={1\over 12\pi}E_2^*$, which exhibits a polynomial dependence on ${1\over \im \tau}$. In this section, we will show that the $\bar\tau$ dependence of any graph integral is polynomial in ${1\over \im\tau}$.

\begin{prop}\label{antiholomorphic-dependence}
For any decorated graph $(\Gamma, n)$, the graph integral can be decomposed as
$$
       W_{(\Gamma, n)}(\tau,\bar\tau)=\sum_{i=0}^N f_i(\tau){1\over \bracket{\im \tau}^i}
$$
where $f_i(\tau)$'s are holomorphic functions on $\mathcal H$, and $N$ is some non-negative integer.
\end{prop}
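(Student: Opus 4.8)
The plan is to reduce the statement to a structural property of the propagator $\Ptau_{0,\infty}$ itself, namely that it is a polynomial in $1/\im\tau$ with holomorphic-in-$(z_1,z_2,\tau)$ coefficients, and then to show that this property is preserved under the two operations that build the graph integral: applying holomorphic derivatives $\pa_{z_{h(e)}}$, and integrating over copies of $E_\tau$ against the measure $d^2z/\im\tau$. First I would record, from Lemma \ref{bcov propagator}, that
$$
   \Ptau_{0,\infty}(z_1,z_2)={1\over 4\pi}\wp(z_1-z_2;\tau)+{\pi\over 12}E_2(\tau)-{1\over 4\im\tau},
$$
so that $\Ptau_{0,\infty}=Q_0(z_1-z_2;\tau)+Q_1(\tau)\cdot{1\over\im\tau}$ with $Q_0,Q_1$ holomorphic (away from the diagonal). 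Since $\pa_{z_{h(e)}}$ commutes with multiplication by the constant-in-$z$ function $1/\im\tau$, applying $n_e$ holomorphic derivatives preserves this shape: each decorated propagator $\pa_{z_{h(e)}}^{n_e}\Ptau_{0,\infty;e}$ is again of the form $A_e + B_e/\im\tau$ with $A_e,B_e$ holomorphic in the $z$-variables and in $\tau$. Expanding the product over $e\in E(\Gamma)$ then writes the integrand (before integration) as $\sum_{j=0}^{|E(\Gamma)|} G_j(\{z_v\};\tau)\,(\im\tau)^{-j}$ with each $G_j$ holomorphic in all $z_v$ and in $\tau$.

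The key remaining point is that integrating such a term over the vertices against $\prod_v d^2z_v/\im\tau$ keeps the polynomial-in-$1/\im\tau$ form — one must check that $\int_{E_\tau} d^2z\,(\text{holomorphic in }z,\tau)/\im\tau$ contributes only finitely many additional negative powers of $\im\tau$, with holomorphic coefficients. Here I would change variables $z_v = x_v + \tau y_v$ with $x_v,y_v\in[0,1)$ (real Fourier/lattice coordinates), so that $d^2z_v/\im\tau = dx_v\,dy_v$ is literally the flat probability measure and the $\im\tau$-dependence is pushed entirely into the integrand. In these coordinates the relevant building blocks are $\wp$ and its derivatives, whose Fourier expansions in $x,y$ have coefficients that are holomorphic in $\tau$ (the standard $q$-expansions of $\wp$ and the Eisenstein series). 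Integrating a product of such Fourier series over $x_v,y_v\in[0,1)$ extracts a diagonal sub-sum of the Fourier coefficients; since each factor $G_j$ is, after this substitution, a finite sum of terms each holomorphic in $\tau$ times a bounded oscillatory factor, the integral is holomorphic in $\tau$. Thus $W_{(\Gamma,n)}(\tau,\bar\tau)=\sum_{j=0}^{N} f_j(\tau)(\im\tau)^{-j}$ with $f_j$ holomorphic and $N\le |E(\Gamma)|$ plus whatever finite excess the integration produces.

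The main obstacle is making the last step rigorous in the presence of the diagonal singularities: the decorated propagators $\pa_{z}^{n_e}\Ptau_{0,\infty}$ blow up along $z_{h(e)}=z_{t(e)}$, so one cannot naively split $\Ptau_{0,\infty}$ and integrate term by term — one must work with the regularized $\Ptau_{\epsilon,L}$, carry the decomposition $\Ptau_{\epsilon,L}=A_e+B_e/\im\tau+(\text{error})_{\epsilon,L}$ through the estimates of Appendix \ref{appendix-finiteness-lemma}, and verify that the limit $\epsilon\to 0$, $L\to\infty$ commutes with extracting the coefficient of each power $(\im\tau)^{-j}$. Concretely I expect one shows: (i) for each fixed $\epsilon,L>0$ the integral is genuinely a polynomial in $1/\im\tau$ with coefficients holomorphic in $\tau$ — because $\pa_z^2 K_t^{E_\tau}$ in the coordinates $x,y$ has a Gaussian/theta structure whose only $\im\tau$-dependence outside the measure is through explicit holomorphic factors and the explicit factor from $|z|^2 = |x+\tau y|^2$, the latter contributing bounded terms — and (ii) the convergence as $\epsilon\to 0,L\to\infty$ is uniform on compact subsets of $\mc H$ by the appendix estimates, hence the limit of polynomials-of-bounded-degree in $1/\im\tau$ with holomorphic coefficients is again of that form, with $f_j(\tau)=\lim f_j^{\epsilon,L}(\tau)$ holomorphic by Weierstrass. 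An alternative, possibly cleaner, route to (i) is to differentiate $W_{(\Gamma,n)}$ in $\bar\tau$ and show $\pa_{\bar\tau}^{N+1}W_{(\Gamma,n)}=0$ for $N=|E(\Gamma)|$, using that $\pa_{\bar\tau}$ of the propagator is itself proportional to $(\im\tau)^{-2}$ times something holomorphic — i.e. the heat-kernel analogue of the holomorphic anomaly equation — so that each $\bar\tau$-derivative lowers the degree of the $1/\im\tau$-polynomial by one; this converts the problem into a finite induction on $|E(\Gamma)|$.
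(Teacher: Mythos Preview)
Your main approach has a genuine gap at step (i). The claim that for fixed $\epsilon,L>0$ the regularized graph integral is already a polynomial in $1/\im\tau$ with holomorphic coefficients is false. The heat kernel $K_t^{E_\tau}$ depends on $\bar\tau$ transcendentally: in your coordinates $z=x+\tau y$ one has $|z-\lambda|^2=|(x-m)+\tau(y-n)|^2$, which involves $\bar\tau$ in the exponent of the Gaussian, and this dependence does not reduce to a polynomial in $1/\im\tau$ after any finite number of integrations. The polynomial-in-$1/\im\tau$ structure of the propagator $\Ptau_{0,\infty}=\tfrac{1}{4\pi}\wp+\tfrac{\pi}{12}E_2-\tfrac{1}{4\im\tau}$ emerges \emph{only} in the limit $\epsilon\to 0$, $L\to\infty$; but, as you yourself note, one cannot take that limit on each propagator separately and then integrate, because the product of the singular $\wp$-terms along the diagonals is not integrable. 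So the first route is circular: the regularized object has the wrong $\bar\tau$-dependence, and the limiting object cannot be multiplied and integrated term by term.

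Your ``alternative route'' is the correct one and is precisely what the paper does, but the actual content of the proof lies in the details you omit. It is not enough to say that $\pa_{\bar\tau}$ of the propagator is proportional to $(\im\tau)^{-2}$; one must account for the $\bar\tau$-dependence of the integration domain $E_\tau$ itself, which produces the operator $\tfrac{\im z}{\im\tau}\pa_{\bar z}+\pa_{\bar\tau}$ acting on each propagator. Applying this to $\Ptau_{\epsilon,L}$ and using the heat equation converts the $t$-integral into boundary terms at $t=L$ and $t=\epsilon$. The $t=L$ term is analyzed by Poisson summation and yields $\tfrac{1}{8i(\im\tau)^2}$ times the graph integral with the edge $e$ deleted; the $t=\epsilon$ term requires a separate technical result (Proposition~\ref{deformed-bracket}) showing that it converges to $\tfrac{1}{(\im\tau)^2}$ times a graph integral on the graph with $e$ collapsed. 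Only after both of these reductions are established does the induction on $|E(\Gamma)|$ close. The holomorphic anomaly heuristic you cite is the right intuition, but the substance of the argument is in handling these two boundary limits rigorously.
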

\begin{proof} We will show that $\pa_{\bar\tau}W_{(\Gamma, n)}(\tau,\bar\tau)$ is also some graph integral with fewer edges. The proposition will then follow by induction. Without loss of generality, we can assume that $\Gamma$ has no self-loops.

First of all, it's easy to see that
$$
   \dpa{\bar\tau}\int_{E_\tau}{d^2z_\tau\over \im \tau} f(z,\bar z;\tau, \bar \tau)=\int_{E_\tau}{d^2z_\tau\over \im \tau}
   \bracket{{\im z\over \im \tau}\dpa{\bar z}+\dpa{\bar\tau}}f(z,\bar z;\tau, \bar \tau)
$$
Here the integration on $E_\tau$ is performed in the region $\{a+b\tau| 0\leq a,b\leq 1\}$. Hence
\begin{align*}
              \pa_{\bar\tau}W_{\bracket{\Gamma, n}}\bracket{\Ptau_{\epsilon,L}}
              &=\prod_{v\in V(\Gamma)}\int_{E_\tau}{d^2z_v\over \im \tau}
              \sum_{e\in E(\Gamma)}\bbracket{\bracket{{\im \bracket{z_{h(e)}-z_{t(e)}}\over \im \tau}\dpa{\bar z_{h(e)}}+\dpa{\bar\tau}}
             \pa_{z_{h(e)}}^{n_e}\Ptau_{\epsilon, L,e}}\\
             &\quad\qquad\bracket{\prod_{e^\prime\in E(\Gamma)\backslash \{e\}}
               \pa_{z_{h(e^\prime)}}^{n_{e^\prime}}\Ptau_{\epsilon,L,e^\prime)}}
\end{align*}
To simplify the notation, we will write
$$
z_e\equiv z_{h(e)}-z_{t(e)}
$$
for any $e\in E(\Gamma)$. Using the heat equation, we find
\begin{align*}
\bracket{{\im \bracket{z_{h(e)}-z_{t(e)}}\over \im \tau}\dpa{\bar z_{h(e)}}+\dpa{\bar\tau}}
             \pa_{z_{h(e)}}^{n_e}\Ptau_{\epsilon, L,e}\\
  =\left.\sum_{\lambda\in \Lambda_\tau}{\im \bracket{z_e-\lambda}\over \im \tau} {1\over 16\pi t} \pa_{z_e}^{n_e+1}e^{-|z_e-\lambda|^2/4t}\right|_{t=\epsilon}^{t=L}
\end{align*}
$ \pa_{\bar\tau}W_{\bracket{\Gamma, n}}\bracket{\Ptau_{\epsilon,L}}$ has two types of contributions corresponding to $t=\epsilon$ or $t=L$ in the above formula.

\subsubsection*{The term with $t=L$} Let's first consider the term with $t=L$. If $n_e>0$, then the summation $\sum\limits_{\lambda\in \Lambda_\tau}{\im \bracket{z_e-\lambda}\over \im \tau} {1\over 16\pi t} \pa_{z_e}^{n_e+1}e^{-|z_e-\lambda|^2/4t}$ is absolutely convergent and uniform in $t$, so
$$
\lim_{L\to \infty}\sum\limits_{\lambda\in \Lambda_\tau}{\im \bracket{z_e-\lambda}\over \im \tau} {1\over 16\pi L} \pa_{z_e}^{n_e+1}e^{-|z_e-\lambda|^2/4L}=0
$$

If $n_e=0$, then
\begin{align*}
        &\sum\limits_{\lambda\in \Lambda_\tau}{\im \bracket{z_e-\lambda}\over \im \tau} {1\over 16\pi L} \pa_{z_e}e^{-|z_e-\lambda|^2/4L}\\
        =&\sum_{n\in \Z}\bracket{{\im z_e\over \im \tau}-n}\sum_{m\in \Z}\bracket{{\bar z_e-(m+n\bar \tau)\over 64\pi L^2}}e^{-\abs{z_e-(m+n\tau)}^2/4L}\\
        =&\sum_{n\in \Z}\bracket{{\im z_e\over \im \tau}-n}\sum_{m\in \Z}\left[\bracket{{\bar z_e-(m+n\bar \tau)\over 64\pi L^2}}e^{-\abs{z_e-(m+n\tau)}^2/4L}\right.\\
        & \quad \left.-\int_{m}^{m+1}dy \bracket{{\bar z_e-(y+n\bar\tau)\over 64\pi L^2}}e^{-\abs{z_e-(y+n\tau)}^2/4L}\right]\\
        &+ \sum_{n\in \Z}\bracket{{\im z_e\over \im \tau}-n}\int_{-\infty}^{\infty}dy \bracket{{\bar z_e-(y+n\bar\tau)\over 64\pi L^2}}e^{-\abs{z_e-(y+n\tau)}^2/4L}\\
        &=I_1+I_2
\end{align*}
Similarly we have $\lim\limits_{L\to \infty}I_1=0$. $I_2$ can be computed using Gaussian integral
\begin{align*}
       I_2=&\sum_{n\in \Z}\bracket{{\im z_e\over \im \tau}-n} \bracket{ \im z_e-n\im \tau\over 32 i \sqrt{\pi}L^{3/2}}e^{-\bracket{\im z_e-n\im \tau}^2/4L}\\
       =&{1\over 32 i \sqrt{\pi}\bracket{\im \tau}^2} \sum_{n\in \Z} {\bracket{{\im z_e\over \im \tau}-n}^2\over \tilde L^{3/2}}e^{-\bracket{{\im z_e\over \im \tau}-n}^2/4\tilde L}\qquad \text{where}\ \tilde L={L\over \bracket{\im \tau}^2}\\
       =&{1\over 8 i \bracket{\im \tau}^2}\sum_{m\in \Z}\bracket{1-8\tilde L \pi^2 m^2}e^{-4m^2\pi^2 \tilde L+2\pi i m {\im z_e\over \im \tau}}
\end{align*}
where in the last step we have used Fourier transformation. Therefore
$$
  \lim_{L\to\infty}I_2={1\over 8 i \bracket{\im \tau}^2}
$$
To summarize, we find
$$
\lim_{L\to \infty}\sum\limits_{\lambda\in \Lambda_\tau}{\im \bracket{z_e-\lambda}\over \im \tau} {1\over 16\pi L} \pa_{z_e}^{n_e+1}e^{-|z_e-\lambda|^2/4L}=\begin{cases} {1\over 8 i \bracket{\im \tau}^2} & \text{if}\ n_e=0\\
      0 & \text{if}\ n_e>0 \end{cases}
$$

\subsubsection*{The term with $t=\epsilon$} Now we consider the term with $t=\epsilon$. Its contribution to $ \pa_{\bar\tau}W_{\bracket{\Gamma, n}}$ is
\begin{align*}
&\prod_{v\in V(\Gamma)}\int_{E_\tau}{d^2z_v\over \im \tau}
              \sum_{e\in E(\Gamma)}\bracket{\sum_{\lambda\in \Lambda_\tau}{\im \bracket{z_e-\lambda}\over \im \tau} {1\over 16\pi \epsilon} \pa_{z_e}^{n_e+1}e^{-|z_e-\lambda|^2/4\epsilon}}\bracket{\prod_{e^\prime\in E(\Gamma)\backslash \{e\}}
               \pa_{z_{h(e^\prime)}}^{n_{e^\prime}}\Ptau_{\epsilon,L;e^\prime}}\\
 =&\sum_{e\in E(\Gamma)}\bracket{ \prod_{v\in V(\Gamma)\backslash \{h(e)\}}\int_{E_\tau}{d^2z_v\over \im \tau}}\int_{\C} {d^2 z_{h(e)}\over \bracket{\im \tau}^2}\bracket{{\im z_e\over 16\pi \epsilon}\pa_{z_{e}}^{n_e+1}e^{-|z_e|^2/4\epsilon}}\bracket{\prod_{e^\prime\in E(\Gamma)\backslash \{e\}}
               \pa_{z_{h(e^\prime)}}^{n_{e^\prime}}\Ptau_{\epsilon,L;e^\prime}}
\end{align*}
By Proposition \ref{deformed-bracket}, it reduces to certain graph integral on $\Gamma^\prime$ under the limit $\epsilon\to 0, L\to \infty$, with an extra factor proportional to ${1\over \bracket{\im \tau}^2}$. Here $\Gamma^\prime$ is obtained from $\Gamma$ by collapsing one edge.

Combining the terms for $t=L$ and terms for $t=\epsilon$, it follows by induction that
$$
    \pa_{\bar\tau}W_{(\Gamma, n)}= \lim_{\substack{\epsilon\to 0\\ L\to \infty}} \pa_{\bar\tau}W_{\bracket{\Gamma, n}}\bracket{\Ptau_{\epsilon,L}}
    ={1\over \bracket{\im \tau}^2}\sum_{i=0}^K f_i(\tau){1\over \bracket{\im \tau}^i}
$$
for some holomorphic function $f_i(\tau)$ and non-negative integer $K$. Therefore $W_{(\Gamma, n)}$ has polynomial dependence on ${1\over \im \tau}$ as well.
\end{proof}

\begin{cor}
Let $\Gamma$ be a graph such that every two vertices are connected by at most one edge, then
\begin{align}
     {\pa_{\bar\tau}}W_{\Gamma}={i\over 8\bracket{\im\tau}^2}\sum_{e\in E(\Gamma)}\bracket{W_{\Gamma/e}-W_{\Gamma\backslash e}}
\end{align}
where $\Gamma/e$ is the graph by collapsing the edge $e$ in $\Gamma$, and $\Gamma\backslash e$ is the graph by deleting the edge $e$ in $\Gamma$.
\end{cor}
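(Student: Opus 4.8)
The plan is to read the Corollary off from the proof of Proposition \ref{antiholomorphic-dependence} by keeping track of the exact constants in the two boundary terms that appear there, specialized to the undecorated case $n=0$ (so every $n_e=0$). As in that proof we may first assume that $\Gamma$ has no self-loop; the hypothesis that no two vertices of $\Gamma$ are joined by more than one edge then guarantees that $\Gamma/e$ also has no self-loop for every edge $e$, so that $W_{\Gamma/e}$ and $W_{\Gamma\backslash e}$ are again graph integrals of the type already considered and their $\epsilon\to 0$, $L\to\infty$ limits exist.

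First I would start from the identity for $\pa_{\bar\tau}W_{(\Gamma,n)}(\Ptau_{\epsilon,L})$ derived in the proof of Proposition \ref{antiholomorphic-dependence}, now with all $n_e=0$. It writes $\pa_{\bar\tau}W_\Gamma(\Ptau_{\epsilon,L})$ as a sum over $e\in E(\Gamma)$ of a graph integral in which the edge $e$ carries the boundary expression $\left.\sum_{\lambda\in\Lambda_\tau}\frac{\im(z_e-\lambda)}{\im\tau}\,\frac{1}{16\pi t}\,\pa_{z_e}e^{-|z_e-\lambda|^2/4t}\,\right|_{t=\epsilon}^{t=L}$, while every other edge still carries $\Ptau_{\epsilon,L}$. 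This boundary evaluation splits $\pa_{\bar\tau}W_\Gamma(\Ptau_{\epsilon,L})$ into a $t=L$ piece and a $t=\epsilon$ piece, each itself a sum over edges.

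For the $t=L$ piece I would invoke the computation already carried out in the proof of Proposition \ref{antiholomorphic-dependence}: when $n_e=0$, the factor attached to $e$ converges as $L\to\infty$ to the constant $\frac{1}{8i(\im\tau)^2}=-\frac{i}{8(\im\tau)^2}$, uniformly in $z_e$; being $z_e$-independent it comes out of all the vertex integrations, and what is left is exactly $W_\Gamma(\Ptau_{\epsilon,L})$ with $e$ deleted but both endpoints of $e$ retained, i.e. $W_{\Gamma\backslash e}(\Ptau_{\epsilon,L})$. Hence the $t=L$ piece tends to $-\frac{i}{8(\im\tau)^2}\sum_e W_{\Gamma\backslash e}$. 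For the $t=\epsilon$ piece I would use Proposition \ref{deformed-bracket}: after unfolding the lattice sum (which turns $\int_{E_\tau}d^2z_{h(e)}$ into $\int_\C d^2z_{h(e)}$ and produces an extra factor $\frac{1}{(\im\tau)^2}$), the factor on $e$ becomes $\frac{\im z_e}{16\pi\epsilon}\pa_{z_e}e^{-|z_e|^2/4\epsilon}$ with $z_e=z_{h(e)}-z_{t(e)}$; integrating by parts in $z_{h(e)}$ and using $\pa_{z_e}\im z_e=\frac{1}{2i}$ together with the concentration of $\frac{1}{4\pi\epsilon}e^{-|z_e|^2/4\epsilon}$ (of total mass $1$) at $z_e=0$ --- the leftover term carrying a prefactor $\im z_e$ that vanishes there --- this factor acts in the limit as the constant $-\frac{1}{8i}=\frac{i}{8}$ times the operation of identifying $z_{h(e)}$ with $z_{t(e)}$, that is, collapsing $e$; absence of a self-loop in $\Gamma/e$ is precisely what keeps the other propagators smooth across the diagonal $z_{h(e)}=z_{t(e)}$, so that the $\epsilon\to 0$ limit may be taken inside the integral. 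Together with the $\frac{1}{(\im\tau)^2}$, the $t=\epsilon$ piece tends to $\frac{i}{8(\im\tau)^2}\sum_e W_{\Gamma/e}$. Assembling the two pieces, with the relative sign dictated by the boundary evaluation $\left.\ \right|_{t=\epsilon}^{t=L}$, gives $\pa_{\bar\tau}W_\Gamma=\frac{i}{8(\im\tau)^2}\sum_{e\in E(\Gamma)}\big(W_{\Gamma/e}-W_{\Gamma\backslash e}\big)$.

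The only genuine difficulty is the analytic input packaged in Proposition \ref{deformed-bracket} and the appendix estimates: one must justify moving the $\epsilon\to 0$ limit inside the vertex integrations, show that the Gaussian on the collapsing edge really converges to the diagonal-restriction operator, and check that this is uniform enough to commute with the simultaneous $L\to\infty$ limit on the remaining propagators. Everything else is bookkeeping of the two explicit constants above and of the elementary combinatorics that distinguishes deletion $\Gamma\backslash e$ from contraction $\Gamma/e$.
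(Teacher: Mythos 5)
Your proposal is correct and follows essentially the same route as the paper: both read the identity off from the proof of Proposition \ref{antiholomorphic-dependence}, identifying the $t=L$ boundary term as contributing ${1\over 8i(\im\tau)^2}W_{\Gamma\backslash e}$ and the $t=\epsilon$ boundary term, after integration by parts using $\pa_{z_e}\im z_e={1\over 2i}$ and the concentration of the Gaussian at $z_e=0$, as contributing ${i\over 8(\im\tau)^2}W_{\Gamma/e}$. Your explicit remark that the single-edge hypothesis is what prevents $\Gamma/e$ from acquiring a self-loop (so the remaining propagators stay smooth on the diagonal and Proposition \ref{deformed-bracket} applies) is a useful point the paper leaves implicit.
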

\begin{proof} In the proof of Proposition \ref{antiholomorphic-dependence}, there're two contributions. The term with $L\to\infty$ contributes ${1\over 8 i(\im \tau)^2} W_{\Gamma\backslash e}$. The term with $\epsilon\to 0$ comes from the integration
$$
\int_{\C}{d^2 z_{h(e)}\over \bracket{\im \tau}^2}\bracket{{\im z_e\over 16\pi \epsilon}\pa_{z_{e}}e^{-|z_e|^2/4\epsilon}}={i\over 8 (\im \tau)^2}\int_{\C}d^2 z_{h(e)}{1\over 4\pi \epsilon}e^{-|z_e|^2/4\epsilon}
$$
which becomes ${i\over 8 (\im \tau)^2} \delta_{z_e,0}$ as $\epsilon\to 0$.

\end{proof}

\begin{cor}\label{almost-modular-result}
For any decorated graph $(\Gamma, n)$, the graph integral $W_{(\Gamma,n)}(\tau, \bar\tau)$ is an almost  modular form of weight $\sum\limits_{e\in E(\Gamma)}(n_e+2)$.
\end{cor}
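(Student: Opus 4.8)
The plan is to combine the two main results already established in the paper. By the Proposition in Section 4 (Modularity), the graph integral $W_{(\Gamma,n)}(\tau,\bar\tau)$ transforms with weight $\sum_{e\in E(\Gamma)}(n_e+2)$ under $SL(2,\Z)$. By Proposition \ref{antiholomorphic-dependence}, it has the polynomial-in-$1/\im\tau$ form $W_{(\Gamma,n)}(\tau,\bar\tau)=\sum_{i=0}^N f_i(\tau)(\im\tau)^{-i}$ with each $f_i$ holomorphic on $\mathcal H$. These are precisely the two defining conditions of an almost modular form of that weight in the weak sense recalled in the introduction, so the corollary is immediate. The only point that deserves a sentence is to note that the weight in the Modularity Proposition matches the weight claimed here, $k=\sum_{e\in E(\Gamma)}(n_e+2)$, and that when $n$ is the zero map this specializes to weight $2|E(\Gamma)|$, recovering the Theorem stated in the introduction.

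Concretely, I would write: fix a decorated graph $(\Gamma,n)$ and set $k=\sum_{e\in E(\Gamma)}(n_e+2)$. By the Modularity Proposition, $W_{(\Gamma,n)}(\gamma\tau,\overline{\gamma\tau})=(C\tau+D)^k W_{(\Gamma,n)}(\tau,\bar\tau)$ for all $\gamma=\left(\begin{smallmatrix}A&B\\C&D\end{smallmatrix}\right)\in SL(2,\Z)$. By Proposition \ref{antiholomorphic-dependence}, there exist a non-negative integer $N$ and holomorphic functions $f_0,\dots,f_N$ on $\mathcal H$ with $W_{(\Gamma,n)}(\tau,\bar\tau)=\sum_{i=0}^N f_i(\tau)(\im\tau)^{-i}$. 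Comparing with the definition of almost modular form of weight $k$ in the introduction, both conditions are met, hence $W_{(\Gamma,n)}$ is an almost modular form of weight $k$.

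There is essentially no obstacle here: the corollary is a packaging statement, and all the analytic content — the existence of the limit defining $W_{(\Gamma,n)}$, the modular transformation law, and the inductive argument (via collapsing an edge, using Proposition \ref{deformed-bracket}) that produces the finite expansion in $1/\im\tau$ — has already been carried out. If anything, the one place to be slightly careful is to make sure the induction in Proposition \ref{antiholomorphic-dependence} genuinely terminates, i.e.\ that iterating $\pa_{\bar\tau}$ strictly decreases the number of edges so that after finitely many steps one reaches a holomorphic (edge-free or self-loop-only) graph integral; this guarantees the degree $N$ is finite. Granting that, the proof is a two-line citation of the two preceding propositions.
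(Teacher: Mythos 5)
Your proof is correct and is exactly the argument the paper intends: the corollary is stated without a separate proof precisely because it is the immediate conjunction of the Modularity Proposition of Section 4 (weight $\sum_{e\in E(\Gamma)}(n_e+2)$) and Proposition \ref{antiholomorphic-dependence} (finite polynomial expansion in $1/\im\tau$), matched against the definition of almost modular form recalled in the introduction. Your side remark about the induction terminating is already covered inside the proof of Proposition \ref{antiholomorphic-dependence}, so nothing further is needed.
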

  \appendix
\section{The BCOV propagator}\label{lemma-section-2}
In this section, we give an elementary proof of Lemma \ref{bcov propagator} and Lemma \ref{self-loop}.
\begin{proof}[Proof of Lemma \ref{bcov propagator}] Let $z_{12}=z_1-z_2$.
\begin{align*}
   \Ptau_{\epsilon,L}(z_1,\bar z_1;z_2,\bar z_2)&=\int_\epsilon^L \sum_{m,n\in \Z}\bracket{{\bar z_{12}-\bracket{m+n\bar \tau}\over 4t}}^2 \exp\bracket{-\abs{z_{12}-\bracket{m+n\tau}}^2/4t}\\
   &=\int_\epsilon^L \sum_{m\in \Z}\bracket{{\bar z_{12}-m\over 4t}}^2\exp\bracket{-\abs{z_{12}-m}^2/4t}\\\\
&+\int_\epsilon^L {dt\over 4\pi t} \sum_{n\neq 0}\sum_{m\in \Z}\left[ \left({\bar z_{12}-(m+n\bar \tau)\over 4t}\right)^2 \exp\left(-|z_{12}-(m+n\tau)|^2/4t  \right)\right.\\
&\quad \left.-\int_m^{m+1}dy
\left({\bar z_{12}-(y+n\bar \tau)\over 4t}\right)^2 \exp\left(-|z_{12}-(y+n\tau)|^2/4t  \right)    \right]\\
&+\int_\epsilon^L {dt\over 4\pi t} \sum_{n\neq 0}\int_{-\infty}^\infty dy\left({\bar z_{12}-(y+n\bar \tau)\over 4t}\right)^2 \exp\left(-|z_{12}-(y+n\tau)|^2/4t  \right)  \\
&= I_1+ I_2+I_3
\end{align*}

$I_1$ is absolutely convergent and
\begin{align*}
    \lim_{\substack{\epsilon\to 0\\ L\to \infty}}I_1&=\int_0^\infty {dt\over 4\pi t} \sum_{m\in \Z} \left({\bar z_{12}-m\over 4t}\right)^2 \exp\left(-|z_{12}-m|^2/4t  \right)\\
&=\sum_{m\in \Z} {1\over (z_{12}-m)^2}\int_0^\infty  {dt\over 4\pi t}{1\over (4t)^2} \exp\left(-1/4t  \right) \\
&={1\over 4\pi}\sum_{m\in \Z} {1\over (z_{12}-m)^2}
\end{align*}

$I_2$ is also absolutely convergent. To see this, let
\begin{align*}
F(y)&=\left({\bar z_{12}-(y+n\bar \tau)\over 4t}\right)^2 \exp\left(-|z_{12}-(y+n\tau)|^2/4t  \right)\\
&={1\over (z_{12}-y-n\tau)^2} G(u), \ \ \ \ \ \ \   u=t/|z_{12}-(y+n\tau)|^2
\end{align*}
where $G(u)={1\over (4u)^2}\exp\left(-1/4u\right)$ which is a smooth and bounded function on $[0,\infty)$. Since
\begin{align*}
       {dF(y)\over dy}={2\over (z_{12}-y-n\tau)^3} G(u)+\left({1\over (z_{12}-y-n\tau)^3}+{1\over (z_{12}-y-n\tau)^2(\bar z_{12}-y-n\bar \tau)} \right)u G^\prime(u)
\end{align*}
We can write $I_2$ as
\begin{align*}
   I_2=\int_\epsilon^L {dt\over 4\pi t} \sum_{n\neq 0}\sum_{m\in Z}\int_m^{m+1}dy\bracket{F(y)-F(m)}
\end{align*}
which is of the order ${1\over \abs{m+n\tau}^3}$ as $m,n\to \infty$. Therefore similar to the calculation for $I_1$, we find
\begin{align*}
   \lim_{\substack{\epsilon\to 0\\ L\to \infty}} I_2&={1\over 4\pi} \sum_{n\neq 0}\sum_{m\in \Z}\left({1\over (z_{12}-m-n\tau)^2 }-\int_m^{m+1} dy{1\over (z_{12}-y-n\tau)^2 }   \right)\\
  &={1\over 4\pi} \sum_{n\neq 0}\sum_{m\in \Z}{1\over (z_{12}-m-n\tau)^2 }
\end{align*}

To evaluate $I_3$, notice that
\begin{align*}
&\int_{-\infty}^\infty dy\left({\bar z_{12}-(y+n\bar \tau)\over 4t}\right)^2 \exp\left(-|z_{12}-(y+n\tau)|^2/4t  \right)\\
&=\int_{-\infty}^\infty dy {y^2-(\im z_{12}-n\im \tau)^2\over (4t)^2}\exp\left(-y^2/4t-(\im z_{12}-n\im \tau)^2/4t\right)\\
&=-{\sqrt{\pi}((\im z_{12}-n\im\tau)^2/t-2)\over 8t^{1/2}  }\exp\left(-(\im z_{12}-n\im\tau)^2/4t\right)\\
&=t{d\over dt}\left(-{\pi\over (4\pi t)^{1/2}}\exp\left(-(\im z_{12}-n\im\tau)^2/4t\right) \right)
\end{align*}
Therefore
\begin{align*}
 \lim_{\substack{\epsilon\to 0\\ L\to \infty}}I_3&=-\lim_{\substack{\epsilon\to 0\\ L\to \infty}}\left.{1\over 4} \sum_{n\neq 0}\left({1\over (4\pi t)^{1/2}}\exp\left(-(\im z_{12}-n\im\tau)^2/4t\right) \right)\right|^{L}_\epsilon\\
&=-\lim_{\substack{\epsilon\to 0\\ L\to \infty}}\left.{1\over 4 \im \tau} \sum_{n\neq 0}\left({1\over (4\pi t)^{1/2}}\exp\left(-(a-n)^2/4t\right) \right)\right|^{L}_\epsilon, \ \ \ a=\im z_{12}/\im \tau, 0\leq a<1
\end{align*}

Obviously,
$$
  \lim_{\epsilon\to 0}\sum_{n\neq 0}\left({1\over (4\pi \epsilon)^{1/2}}\exp\left(-(a-n)^2/4\epsilon\right) \right)=0
$$

The Poisson summation formula gives
$$
    \sum_{n\in \Z}\left({1\over (4\pi L)^{1/2}}\exp\left(-(a-n)^2/4 L\right) \right)=\sum_{m\in \Z} \exp\left(-4\pi^2m^2 L+2\pi i m a\right)
$$
hence
\begin{align*}
   &\lim_{L\to \infty}\sum_{n\in \Z}\left({1\over (4\pi L)^{1/2}}\exp\left(-(a-n)^2/4 L\right) \right)\\
&=\lim_{L\to \infty}\sum_{m\in \Z} \exp\left(-4\pi^2m^2 L+2\pi i m a\right)=1
\end{align*}

Adding the three terms together, we find
\begin{align*}
\lim_{\substack{\epsilon\to 0\\ L\to \infty}}\left( \Ptau_{\epsilon,L}(z_1,\bar z_1;z_2,\bar z_2)\right)&={1\over 4\pi}\sum_{n\in \Z}\sum_{m\in \Z}{1\over (z_{12}-m-n\tau)^2}-{1\over 4\im \tau}\\
&={1\over 4\pi}\wp(z_{12};\tau)+{1\over 12\pi}E_2(\tau)-{1\over 4\im \tau}   \\
&={1\over 4\pi}\wp(z_{12};\tau)+{1\over 12\pi}E_2^*(\tau;\bar\tau)
\end{align*}

\end{proof}

\begin{proof}[Proof of Lemma \ref{self-loop}] From the proof of Lemma \ref{bcov propagator}, it's easy to see that
\begin{align*}
   \lim_{\substack{\epsilon\to 0\\L\to\infty}}\lim_{z_1\to z_2}I_1(z_1,z_2)&={1\over 4\pi}\sum_{m\neq 0}{1\over m^2}\\
   \lim_{\substack{\epsilon\to 0\\L\to\infty}}\lim_{z_1\to z_2}I_2(z_1,z_2)&={1\over 4\pi}\sum_{n\geq 0}\sum_{m\in Z}{1\over (m+n\tau)^2}\\
   \lim_{\substack{\epsilon\to 0\\L\to\infty}}\lim_{z_1\to z_2}I_3(z_1,z_2)&=-{1\over 4\pi \im \tau}
\end{align*}
It follows that
$$
 \lim_{\substack{\epsilon\to 0\\L\to\infty}}\lim_{z_1\to z_2}I_2(z_1,z_2) \Ptau_{\epsilon,L}(z_1,z_2)={1\over 12\pi }E_2^*(\tau,\bar\tau)
$$
This proves the case for $n=0$. For $n>0$,
\begin{align*}
 & \pa_{z_1}^n\Ptau_{\epsilon,L}(z_1,z_2)\\=&\int_\epsilon^L {dt\over 4\pi t} \sum_{m,n\in \Z}\bracket{{\bar z_{12}-\bracket{m+n\bar \tau}\over 4t}}^{n+2} \exp\bracket{-\abs{z_{12}-\bracket{m+n\tau}}^2/4t}\\
   =&\int_\epsilon^L{dt\over 4\pi t} \sum_{m,n\in \Z}{1\over \bracket{z_{12}-(m+n\tau)}^{n+2}}{\abs{\bar z_{12}-\bracket{m+n\bar \tau}}^{2}\over 4t}^{n+2} \exp\bracket{-\abs{z_{12}-\bracket{m+n\tau}}^2/4t}
\end{align*}
which is in fact absolutely convergent. Therefore in this case
\begin{align*}
\lim_{\substack{\epsilon\to 0\\L\to\infty}}\lim_{z_1\to z_2}\pa_{z_1}^n\Ptau_{\epsilon,L}(z_1,z_2)&=\sum_{\substack{m,n\in \Z\\ (m,n)\neq (0,0)}}{1\over (m+n\tau)^{n+2}}\int_0^\infty {dt\over 4\pi t}{1\over t^{n+2}}e^{-1/t}\\
&={(n+1)!\over 4\pi}\sum_{\substack{m,n\in \Z\\ (m,n)\neq (0,0)}}{1\over (m+n\tau)^{n+2}}\\
&=\begin{cases}{(n+1)!\zeta(n+2)\over 2\pi}E_{n+2} & \text{if $n$ is even}\\ 0 & \text{if $n$ is odd} \end{cases}
\end{align*}
\end{proof}

\section{Graph integrals on $\C$}\label{appendix-finiteness-lemma}
In this appendix, we will prove some results for graph integrals on $\C$.

Let $z$ be the linear holomorphic coordinate on $\C$, $\Box=-4{\pa\over \pa z}{\pa\over \pa \bar z}$ be the standard Laplacian operator. The following notations will be used throughout this section
$$
          H_{\epsilon}^{L}(z,\bar z)=\int_\epsilon^L {dt\over 4\pi t}e^{-|z|^2/4t}
$$

Let $(\Gamma, n)$ be a decorated graph as in section \ref{section-graph-integral}. We will assume that $\Gamma$ is connected without self-loops. We consider the following graph integral on $\C$
$$
   W_{(\Gamma,n)}(H_\epsilon^L,\Phi)\equiv\prod_{v\in V(\Gamma)} \int_{\C}  d^2z_v
    \left(\prod_{e\in E(\Gamma)}\pa^{n_e}_{z_e}H_\epsilon^L(z_e,\bar z_e) \right)
   \Phi, \ \ \mbox{where}\ z_e=z_{h(e)}-z_{t(e)}
$$
here $\Phi$ is a smooth function on $\mathbb C^{|V(\Gamma)|}$ with compact support. In the above integral, we view $H_\epsilon^L(z_e,\bar z_e)$ as propagators associated to the edge $e\in E$, and we have only holomorphic derivatives on the propagators.

\begin{prop}\label{finiteness lem}
 The following limit  exists for the above graph integral
\begin{eqnarray*}
   \lim_{\epsilon\to 0} W_{(\Gamma,n)}(H_\epsilon^L,\Phi)
\end{eqnarray*}
\end{prop}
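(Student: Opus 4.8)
The strategy is to reduce the potential divergence as $\epsilon \to 0$ to a question about the local structure of the integrand near the diagonals, and then estimate each dangerous region by power counting in the short-time parameter $t$. The starting observation is that each propagator $H_\epsilon^L(z,\bar z) = \int_\epsilon^L \frac{dt}{4\pi t} e^{-|z|^2/4t}$ is, together with all its $z$-derivatives, smooth and uniformly bounded for $|z|$ bounded away from $0$; the only source of singular behavior as $\epsilon \to 0$ is the region where $z_e \to 0$ for some collection of edges $e$. Since $\Gamma$ is connected with no self-loops and $\Phi$ has compact support, I would first integrate out the ``center of mass'' (say by fixing $z_{v_\bullet}$ for one vertex $v_\bullet$ and introducing the differences $y_v = z_v - z_{v_\bullet}$), reducing to an integral over $\prod_{v\neq v_\bullet} \C$ of $\bigl(\prod_e \pa_{y_e}^{n_e} H_\epsilon^L(y_e,\bar y_e)\bigr)\,\Phi$ against a compactly supported smooth $\Phi$. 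It suffices to bound this uniformly in $\epsilon$ and show the integrand converges.

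\smallskip

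The key step is a uniform bound. Write the product over edges by inserting the integral representation for each propagator, picking up one time variable $t_e \in [\epsilon, L]$ per edge. Each factor $\pa_{y_e}^{n_e} H$ contributes, after differentiating, a term of the form $t_e^{-1}\, P_{n_e}(\bar y_e / t_e)\, e^{-|y_e|^2/4t_e}$ where $P_{n_e}$ is a polynomial of degree $n_e$; crucially $\pa_{y_e}$ is a \emph{holomorphic} derivative, so it produces powers of $\bar y_e$ only (no powers of $y_e$), which is what makes the Gaussian decay survive. The plan is then: first perform the Gaussian integrals over the $y_v$ variables. Because $\Gamma$ is connected, the quadratic form $\sum_e |y_e|^2/4t_e = \sum_e |y_{h(e)} - y_{t(e)}|^2/4t_e$ in the variables $\{y_v\}_{v\neq v_\bullet}$ is positive definite, with determinant equal (up to a constant) to the weighted spanning-tree polynomial $\sum_{T} \prod_{e\notin T} t_e^{-1}$ — the first Symanzik-type polynomial of $\Gamma$. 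Carrying out the Gaussian integration (and bounding the polynomial prefactors $P_{n_e}$, which after integration contribute at worst inverse powers of the $t_e$ dominated by the Gaussian), one reduces $\lim_{\epsilon\to0} W_{(\Gamma,n)}(H_\epsilon^L,\Phi)$ to an absolutely convergent $|E(\Gamma)|$-fold integral $\int_0^L \cdots \int_0^L \prod_e dt_e \; (\text{rational function of the } t_e)$, and the task becomes checking that this integral converges at the corner where all $t_e \to 0$.

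\smallskip

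For that final convergence check I would use a Hepp-sector / subgraph argument: the integral over the region $t_e \to 0$ is controlled subgraph by subgraph, and for each subgraph $\gamma \subseteq \Gamma$ the relevant ``superficial degree of divergence'' is governed by $\dim_\C(\text{fibers over } v_\bullet) \cdot (\text{loop number}) $ versus the number of edges — concretely, the measure $\prod_{v\in\gamma, v\neq v_\bullet} d^2 y_v$ scales like $(t)^{2(|V(\gamma)|-1)}$ while $\prod_{e\in\gamma} t_e^{-1} e^{-|y_e|^2/4t_e}$ scales like $(t)^{-|E(\gamma)|}$ times Gaussian, and one checks $2(|V(\gamma)|-1) - |E(\gamma)| \geq$ enough to beat the logarithm, using that a graph with no self-loops and at least one edge has $|E(\gamma)| \leq$ (something controlled by $|V(\gamma)|$ in the regime that matters), or more honestly, that the holomorphic-only derivatives prevent the worst cancellations. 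The main obstacle is precisely this power-counting/uniformity estimate: handling the polynomial prefactors $P_{n_e}$ from the derivatives simultaneously with the spanning-tree structure of the Gaussian, and organizing the corner $t_e\to 0$ into sectors so that each sector's divergence is manifestly at worst logarithmic and in fact absent — this is the technical heart, and I expect it to be exactly what the paper defers to the remainder of Appendix \ref{appendix-finiteness-lemma}. Once uniform integrability is in hand, dominated convergence gives existence of $\lim_{\epsilon\to 0}$ immediately.
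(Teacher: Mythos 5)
Your overall strategy --- reduce by the center of mass, pass to the Schwinger-parameter representation, identify the Gaussian determinant with the spanning-tree polynomial $\mathcal P_\Gamma(t)=\sum_{T}\prod_{e\notin T}t_e$, and then check convergence of the resulting $t$-integral at the corner where the $t_e\to 0$ --- is the paper's. For the last step you propose Hepp sectors, while the paper instead collapses the edges joining a pair of vertices one group at a time, using $\mathcal P_\Gamma(t)\geq\bigl(\sum_{i}t_{e_1}\cdots\hat t_{e_i}\cdots t_{e_k}\bigr)\mathcal P_{\bar\Gamma}(t)$ together with $\sum_i t_1\cdots\hat t_i\cdots t_k\geq k\prod_i t_i^{(k-1)/k}$; both routes work, the relevant count being $|V(\gamma)|-c(\gamma)\geq 1$ for any subgraph with an edge and no self-loops (your exponent $2(|V(\gamma)|-1)$ should be $|V(\gamma)|-1$, since each $d^2y_v$ scales as $t$ under $y\mapsto t^{1/2}y$, but the conclusion is unaffected).

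The genuine gap is the treatment of the derivative prefactors. You assert that the factors $\left(\sum_i\rho_{v(i),e}\bar y_i/4t_e\right)^{n_e}$ ``after integration contribute at worst inverse powers of the $t_e$ dominated by the Gaussian,'' but this is exactly the delicate point and it is not automatic: integrating $\bar y^{\,n}/t_e^{\,n}$ against the Gaussian produces factors of $M_\Gamma^{-1}(t)_{i,j}/t_e$, and one must show these are bounded \emph{uniformly} as the $t_e\to 0$ at possibly very different rates. The paper does this by integrating by parts in the holomorphic variables (possible precisely because only $\bar y$'s appear, as you observe), trading each prefactor for $\sum_j\bigl(\sum_i\rho_{v(i),e}M_\Gamma^{-1}(t)_{i,j}/t_e\bigr)\partial_{y_j}$ acting on $\Phi$, and then proving Lemma \ref{bound}: the coefficient $\sum_i\rho_{v(i),e}M_\Gamma^{-1}(t)_{i,j}/t_e$ is bounded by $2$ for all positive $t$, via the cut formula for $M_\Gamma^{-1}$ (Lemma \ref{cut}) and the observation that each cut in the numerator arises from deleting $e$ from a spanning tree containing $e$, so the numerator is a sub-sum of $\mathcal P_\Gamma(t)$. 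Without this combinatorial bound (or an equivalent sector-by-sector estimate of $M_\Gamma^{-1}$), your reduction to the scalar integral $\int\prod_e dt_e/\mathcal P_\Gamma(t)$ is unjustified; with it, the rest of your plan goes through.
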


\begin{proof} Let $V=|V(\Gamma)|$ be the number of vertices and $E=|E(\Gamma)|$ be the number of edges. We index the vertices by
$$
      v: \{1,2,\cdots, V\} \to V(\Gamma), \ \ \ V=|V(\Gamma)|
$$
and write $z_{i}$ for $z_{v(i)}$ if there's no confusion. We specify the last vertex by $v_\bullet$
$$
    v(V)=v_\bullet
$$

Define the incidence matrix $\{\rho_{v,e}\}_{v\in V(\Gamma), e\in E(\Gamma)}$ by
$$
     \rho_{v,e}=\begin{cases}  1& h(e)=v \\ -1 & t(e)=v\\ 0 & \mbox{otherwise} \end{cases}
$$
and define the $(V-1)\times (V-1)$ matrix $M_\Gamma(t)$ as in \cite{QFT} \S6-2-3 by
\begin{eqnarray}\label{graph matrix}
   M_\Gamma(t)_{i,j}=\sum_{e\in E(G)}\rho_{v(i),e}{1\over t_e}\rho_{v(j),e}, \ \ \ \ \ \ 1\leq i,j\leq V-1
\end{eqnarray}
where $t_e$ is a variable introduced for each edge coming from the propagator. Consider the following linear change of variables
$$
    \begin{cases}
         z_i=y_i+y_V& 1\leq i\leq  V-1\\
         z_V=y_V
    \end{cases}
$$

The graph integral can be written as
\begin{eqnarray*}
    &&W_{(\Gamma,n)}(H_\epsilon^L,\Phi)\\&=&\int_\C d^2y_V \int_{\C^{V-1}}\prod_{i=1}^{V-1}d^2y_i
    \int_{[\epsilon, L]^{E}}\prod_{e\in E(\Gamma)} {dt_e\over 4\pi t_e}  \prod_{e\in E(\Gamma)}\left({\sum\limits_{i=1}^{V-1}\rho_{v(i),e}\bar y_i\over 4 t_e  } \right)^{n_e}
\\&&\exp\left(-{1\over 4}\sum_{i,j=1}^{V-1}M_\Gamma(t)_{i,j}y_{i}\bar y_{j}  \right)\Phi
\end{eqnarray*}
Using integration by parts, we get
\begin{eqnarray*}
W_{(\Gamma,n)}(H_\epsilon^L,\Phi)&=&\int_\C d^2y_V \int_{\C^{V-1}}\prod_{i=1}^{V-1}d^2y_i
  \int_{[\epsilon, L]^{E}} \prod_{e\in E(\Gamma)} {dt_e\over 4\pi t_e}
\exp\left(-{1\over 4}\sum_{i,j=1}^{V-1}M_\Gamma(t)_{i,j}y_{i}\bar y_{j} \right) \\
&&\prod_{e\in E(\Gamma)}\left(\sum\limits_{j=1}^{V-1}{\sum\limits_{i=1}^{V-1}\rho_{v(i),e}M_\Gamma^{-1}(t)_{i,j}\over t_e}  {\pa\over \pa y_{j}} \right)^{n_e} \Phi
\end{eqnarray*}

By Lemma \ref{bound} below, we see that
\begin{eqnarray*}
\left|\prod_{e\in E(\Gamma)}\left(\sum\limits_{j=1}^{V-1}{\sum\limits_{i=1}^{V-1}\rho_{v(i),e}M_\Gamma^{-1}(t)_{i,j}\over t_e}  {\pa\over \pa y_{j}} \right)^{n_e} \Phi\right|\leq
C \left| \tilde \Phi \right|
\end{eqnarray*}
where $C$ is a constant which doesn't depend on $\{t_e\}$ and $\{y_i\}$, and $\tilde \Phi$ is some smooth function with compact support. To prove that  $\lim\limits_{\epsilon\to 0}W_{\Gamma,\{n_e\}}(H_\epsilon^L,\Phi)$ exists, we only need to show that
\begin{eqnarray*}
   &&\lim_{\epsilon\to 0} \int_{\C^{V-1}}\prod_{i=1}^{V-1}d^2y_i
   \int_{[\epsilon, L]^{E}} \prod_{e\in E(\Gamma)} {dt_e\over 4\pi t_e}
\exp\left(-{1\over 4}\sum_{i,j=1}^{V-1}M_\Gamma(t)_{i,j}y_{i}\bar y_{j} \right)\\
&& =\lim_{\epsilon\to 0}\int_{[\epsilon, L]^{E}} \prod_{e\in E(\Gamma)} {dt_e\over 4\pi t_e}{1\over \det M_\Gamma(t)}
\end{eqnarray*}
exists. By Lemma \ref{determinant} below, we have
$$
\lim_{\epsilon\to 0}\int_{[\epsilon, L]^{E}} \prod_{e\in E(\Gamma)} {dt_e\over 4\pi t_e}{1\over \det M_\Gamma(t)}
=\lim_{\epsilon\to 0}\int_{[\epsilon, L]^{E}} \prod_{e\in E(\Gamma)} {dt_e\over 4\pi}{1\over \sum\limits_{T\in \mathrm{Tree}(\Gamma)}\prod\limits_{e
\notin T}{t_e}}
$$
where $\mathrm{Tree}(\Gamma)$ is the set of spanning trees of $\Gamma$. Let $v(1), v(2)$ be two vertices of $\Gamma$, $\{e_1,\cdots, e_k\}$ be the set of edges that connects $v(1), v(2)$. Let $\bar\Gamma$ be the graph obtained from $\Gamma$ by collapsing $v(1)$ and $v(2)$ and all the edges $e_1,\cdots, e_k$ into one single vertex. Then $\bar\Gamma$ is also a connected graph without self-loops, with $E(\bar\Gamma)=E(\Gamma)\backslash \{e_1,\cdots, e_k\}$.  Obviously, for non-negative $t_e$'s,
\begin{eqnarray*}
  {\sum\limits_{T\in \mathrm{Tree}(\Gamma)}\prod\limits_{e
\notin T}{t_e}}\geq  \left(\sum\limits_{i=1}^k {t_{e_1}\cdots \hat t_{e_i} \cdots t_{e_k}} \right){\sum\limits_{T\in {Tree}(\bar\Gamma)}\prod\limits_{e
\notin T}{t_e}}\
\end{eqnarray*}
Therefore
\begin{eqnarray*}
\prod_{e\in E(\Gamma)} \int_\epsilon^L{dt_e\over 4\pi}{1\over \sum\limits_{T\in \mathrm{Tree}(\Gamma)}\prod\limits_{e
\notin T}{t_e}}  &\leq&
\prod\limits_{i=1}^k\int_\epsilon^L{dt_i\over 4\pi}{1\over \sum\limits_{i=1}^k {t_1\cdots \hat t_i \cdots t_k} }\prod_{e\in E(\bar\Gamma)} \int_\epsilon^L{dt_e\over 4\pi}{1\over \sum\limits_{T\in {Tree}(\bar\Gamma)}\prod\limits_{e
\notin T}{t_e}}\\
&\leq&\prod\limits_{i=1}^k\int_\epsilon^L{dt_i\over 4\pi}{k\over \prod\limits_{i=1}^k t_i^{{k-1\over k}}}\prod_{e\in E(\bar\Gamma)} \int_\epsilon^L{dt_e\over 4\pi}{1\over \sum\limits_{T\in {Tree}(\bar\Gamma)}\prod\limits_{e
\notin T}{t_e}}\\
&\leq& C(L)\prod_{e\in E(\bar\Gamma)} \int_\epsilon^L{dt_e\over 4\pi}{1\over \sum\limits_{T\in {Tree}(\bar\Gamma)}\prod\limits_{e
\notin T}{t_e}}
\end{eqnarray*}
where $C(L)$ is a constant that depends only on $L$. By successive collapsing of vertices, we see that $\lim\limits_{\epsilon\to 0}\int_{[\epsilon, L]^{E}} \prod_{e\in E(\Gamma)} {dt_e\over 4\pi t_e}{1\over \det M_\Gamma(t)} $ exists. This proves the lemma.
\end{proof}

\begin{dfn}
A tree $T\subset \Gamma$ is said to be a spanning tree for the connected graph $\Gamma$ if every vertex of $\Gamma$ lies in $T$.
\end{dfn}

\begin{lem}\label{determinant}The determinant of the $(V-1)\times (V-1)$ matrix $\{M_\Gamma(t)_{i,j}\}_{1\leq i,j\leq V-1}$ defined by equation (\ref{graph matrix}) is given by
\begin{eqnarray}
    \det\ M_\Gamma(t)=\sum_{T\in \mathrm{Tree}(\Gamma)}\prod_{e\in T}{1\over t_e}
\end{eqnarray}
where $\mathrm{Tree}(\Gamma)$ is the set of spanning trees of the graph $\Gamma$.
\end{lem}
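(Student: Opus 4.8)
This is the \emph{weighted Matrix--Tree theorem}, so the plan is to recognize $M_\Gamma(t)$ as a reduced weighted graph Laplacian and then apply the Cauchy--Binet formula. Write $N$ for the full $V\times E$ incidence matrix with entries $N_{ie}=\rho_{v(i),e}$ ($1\le i\le V$, $e\in E(\Gamma)$), and let $D=\mathrm{diag}\{1/t_e\}_{e\in E(\Gamma)}$ be the $E\times E$ diagonal matrix of edge weights. The full weighted Laplacian is $NDN^{T}$, and by the very definition (\ref{graph matrix}) the matrix $M_\Gamma(t)$ is exactly the principal submatrix $\tilde N D\tilde N^{T}$, where $\tilde N$ denotes the $(V-1)\times E$ matrix obtained from $N$ by deleting the row indexed by the distinguished vertex $v_\bullet=v(V)$.

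First I would apply the Cauchy--Binet formula with $A=\tilde N$ and $B=D\tilde N^{T}$. Since $D$ is diagonal, the $S$-row block of $B$ is $D_{SS}(\tilde N_{\cdot S})^{T}$ for $S\subseteq E(\Gamma)$, so the expansion collapses to a sum over $(V-1)$-element subsets:
\[
\det M_\Gamma(t)=\sum_{\substack{S\subseteq E(\Gamma)\\ |S|=V-1}}\Bigl(\prod_{e\in S}\frac{1}{t_e}\Bigr)\bigl(\det \tilde N_{\cdot S}\bigr)^{2},
\]
where $\tilde N_{\cdot S}$ is the $(V-1)\times(V-1)$ submatrix of $\tilde N$ with columns in $S$. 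It thus remains to identify $\det\tilde N_{\cdot S}$ combinatorially.

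The claim is that $\det\tilde N_{\cdot S}=\pm1$ if the edge set $S$ spans a tree of $\Gamma$, and $\det\tilde N_{\cdot S}=0$ otherwise. I would prove this in two steps. If the subgraph $(V(\Gamma),S)$ contains a cycle, then the signed incidence vectors of the edges around that cycle add to zero, so the corresponding columns of $N$ (hence of $\tilde N$) are linearly dependent and $\det\tilde N_{\cdot S}=0$; thus only forests contribute, and a forest with $V-1$ edges on $V$ vertices is automatically a spanning tree. If $S$ is a spanning tree, I would induct on $V$: a tree on $\ge 2$ vertices has at least two leaves, so there is a leaf $v\neq v_\bullet$, whose row in $\tilde N_{\cdot S}$ has a single nonzero entry $\pm1$ (the unique edge of $S$ at $v$); expanding the determinant along that row reduces to the reduced incidence matrix of the spanning tree $S\setminus\{e_0\}$ of $\Gamma\setminus\{v\}$, with $v_\bullet$ still distinguished, and the induction closes (base case $V=1$: the empty determinant is $1$).

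Plugging this back, only spanning trees survive and each contributes $\prod_{e\in T}1/t_e$, giving the asserted formula. The main obstacle is the minor-evaluation step, and within it the need to check that deleting the $v_\bullet$-row does not collapse the rank: for a spanning tree $S$ the $V-1$ columns of $N$ indexed by $S$ span a $(V-1)$-dimensional subspace of $\R^{V}$ whose orthogonal complement is the line through the all-ones vector, which has nonzero $v_\bullet$-coordinate, so forgetting that coordinate is injective on the span and $\det\tilde N_{\cdot S}\neq 0$; the leaf induction then pins the value to $\pm1$. The rest is bookkeeping with Cauchy--Binet.
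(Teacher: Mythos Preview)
Your argument is correct: the identification $M_\Gamma(t)=\tilde N D\tilde N^{T}$ is exactly the definition (\ref{graph matrix}), the Cauchy--Binet expansion is applied properly, and your evaluation of the $(V-1)\times(V-1)$ minors of the reduced incidence matrix (zero on cycles, $\pm1$ on spanning trees via leaf induction) is the standard total-unimodularity argument. One small remark: in the leaf step you should note explicitly that when $V\ge2$ a tree has at least two leaves, so one of them is distinct from $v_\bullet$; you do say this, but it is the only place the choice of deleted row matters and it is worth flagging.

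As for comparison with the paper: the paper does not actually prove this lemma---it simply refers to \cite{QFT}~\S6-2-3, where the Symanzik/Kirchhoff polynomial identity is derived. Your Cauchy--Binet proof is precisely the classical route that reference follows, so you are not taking a different approach so much as supplying the omitted details. The benefit of writing it out, as you have, is that the same minor computation immediately yields the total unimodularity of $\tilde N$, which also underlies Lemma~\ref{cut} and Lemma~\ref{bound} later in the appendix.
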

\begin{proof} See for example \cite{QFT} \S 6-2-3.
\end{proof}

\begin{dfn}
Given a connected graph $\Gamma$ and two disjoint subsets of vertices $V_1, V_2\subset V(\Gamma)$, $V_1\cap V_2=\emptyset$,  we define $\mathrm{Cut}(\Gamma; V_1, V_2)$ to be the set of subsets $C\subset E(\Gamma)$ satisfying the following property
\begin{enumerate}
\item The removing of the edges in $C$ from $\Gamma$ divides $\Gamma$ into exactly two connected trees, which we denoted by $\Gamma_1(C), \Gamma_2(C)$, such that
$V_1\subset V(\Gamma_1(C)), V_2\subset V(\Gamma_2(C))$.
\item $C$ doesn't contain any proper subset satisfying property $1$.
\end{enumerate}
\end{dfn}
It's easy to see that each cut $C\in \mathrm{Cut}(\Gamma; V_1, V_2)$ is obtained by adding one more edge to some $\{e\in E(\Gamma)|e\notin T\}$ where $T$ is some spanning tree of $\Gamma$. Then we have the following result; see \cite{QFT} \S 6-2-3.

\begin{lem}\label{cut}The inverse of the matrix $M_\Gamma(t)$ is given by
\begin{eqnarray*}
   M_\Gamma^{-1}(t)_{i,j}={1\over \mc P_\Gamma(t)}\sum_{C\in \mathrm{Cut}(\Gamma;\{v(i),v(j)\},\{v_\bullet\})} \prod_{e\in C}t_e
\end{eqnarray*}
where
\begin{eqnarray*}
   \mc P_\Gamma(t)=\sum_{T\in \mathrm{Tree}(\Gamma)} \prod_{e\not\in T}t_e=\det\ M_\Gamma(t)\prod_{e\in E(\Gamma)}t_e
\end{eqnarray*}
\end{lem}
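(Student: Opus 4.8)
The plan is to reduce the computation of $M_\Gamma(t)^{-1}$ to the Matrix--Tree theorem (Lemma \ref{determinant}) via the adjugate formula, together with its ``all-minors'' refinement. Throughout write $w_e = 1/t_e$ for the edge weights, let $B = (\rho_{v,e})$ be the $V\times E$ incidence matrix, and let $L = B\,\mathrm{diag}(w_e)\,B^{T}$ be the full weighted Laplacian of $\Gamma$, so that $M_\Gamma(t)$ is exactly $L$ with the row and column indexed by $v_\bullet = v(V)$ deleted. The first step is purely combinatorial: I claim the map $C \mapsto \Gamma\setminus C$ is a bijection between $\mathrm{Cut}(\Gamma;\{v(i),v(j)\},\{v_\bullet\})$ and the set of spanning $2$-forests $F = F_1 \sqcup F_2$ with $v(i),v(j)\in F_1$ and $v_\bullet\in F_2$. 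Indeed any $C$ satisfying property (1) has $\abs{\Gamma\setminus C} = V-2$ edges, so $\abs{C} = \abs{E}-V+2$ is forced; minimality (property (2)) is then automatic, and conversely the complement of such a $2$-forest is a cut. Under this bijection $\prod_{e\in C} t_e = \prod_{e\notin F} t_e$.

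Next I would pass from the inverse to cofactors. Since $M_\Gamma(t)$ is symmetric, the adjugate formula gives
\begin{align*}
   M_\Gamma^{-1}(t)_{i,j} = \frac{(-1)^{i+j}\det M_\Gamma(t)_{\hat i,\hat j}}{\det M_\Gamma(t)},
\end{align*}
where $M_\Gamma(t)_{\hat i,\hat j}$ is $M_\Gamma(t)$ with row $i$ and column $j$ removed. Deleting these is the same as deleting rows $\{i,V\}$ and columns $\{j,V\}$ from $L$, so the numerator is a doubly-reduced minor of the Laplacian. Writing this minor as $\det\big(B_{\widehat{\{i,V\}}}\,\mathrm{diag}(w_e)\,B_{\widehat{\{j,V\}}}^{T}\big)$, where $B_{\widehat R}$ denotes $B$ with the rows in $R$ deleted, and applying the Cauchy--Binet formula expands it as
\begin{align*}
   \sum_{\substack{S\subset E\\ \abs S = V-2}} \det\big(B_{\widehat{\{i,V\}}}[S]\big)\,\Big(\prod_{e\in S} w_e\Big)\,\det\big(B_{\widehat{\{j,V\}}}[S]\big),
\end{align*}
with $[S]$ meaning restriction to the columns in $S$. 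The classical evaluation of incidence-matrix minors says that $\det(B_{\widehat R}[S])\in\{0,\pm1\}$, and is nonzero precisely when $S$ is a spanning forest whose components biject with the deleted vertex set $R$, one deleted vertex per tree. With $R=\{v(i),v_\bullet\}$ and $R=\{v(j),v_\bullet\}$ respectively, a term survives only when $S$ separates $v(i)$ from $v_\bullet$ and also $v(j)$ from $v_\bullet$; since $S$ has just two components this forces $v(i),v(j)$ into the one component not containing $v_\bullet$. Thus the surviving $S$ are exactly the separating $2$-forests $F$ from the first step.

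Finally I would collect the pieces. Each surviving term contributes $\pm\prod_{e\in F} w_e$, and the remaining genuinely delicate point is the sign bookkeeping: one must check that the product of the two incidence-minor signs, times $(-1)^{i+j}$, equals $+1$ for every contributing $F$, so that the cofactor equals the honest positive sum $\sum_F \prod_{e\in F} w_e$. This sign analysis is the all-minors Matrix--Tree theorem and is the main obstacle. The diagonal case $i=j$ sidesteps it entirely, since there $M_\Gamma(t)_{\hat i,\hat i}$ is the reduced Laplacian of the graph obtained by identifying $v(i)$ with $v_\bullet$, whence Lemma \ref{determinant} applied to that graph gives $\det M_\Gamma(t)_{\hat i,\hat i} = \sum_F \prod_{e\in F} w_e$ over $2$-forests separating $v(i)$ from $v_\bullet$, with no signs at all. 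Granting the sign analysis in general, I substitute $w_e = 1/t_e$, multiply numerator and denominator by $\prod_{e\in E} t_e$, and use $\det M_\Gamma(t)\prod_{e} t_e = \mc P_\Gamma(t)$ (Lemma \ref{determinant}) together with the cut/$2$-forest bijection $\prod_{e\in F}(1/t_e)\prod_{e\in E} t_e = \prod_{e\in C} t_e$ to land exactly on the claimed expression $M_\Gamma^{-1}(t)_{i,j} = \mc P_\Gamma(t)^{-1}\sum_{C}\prod_{e\in C} t_e$.
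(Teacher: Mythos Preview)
Your proposal is correct and takes a genuinely different route from the paper's argument. The paper does not compute cofactors at all: it simply \emph{names} the claimed expression $A_{i,j}$ and verifies $\sum_k A_{i,k}M_\Gamma(t)_{k,j}=\delta_{ij}$ by a direct combinatorial count, observing that for fixed $i$ the sum $\sum_{j}M_\Gamma(t)_{j,i}\sum_{C}\prod_{e\in C}t_e$ rearranges as a sum over cuts separating $v(i)$ from $v_\bullet$, and that each such cut pairs with exactly one adjacent edge to rebuild a spanning tree, recovering $\mc P_\Gamma(t)$. The off-diagonal vanishing is then asserted by the same mechanism and left to the reader. Your approach instead goes through the adjugate formula, Cauchy--Binet, and the all-minors Matrix--Tree theorem, with the cut/$2$-forest bijection translating the answer into the paper's language. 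What you gain is a structural explanation tying the lemma to a standard named result; the cost is that the ``delicate sign bookkeeping'' you flag is precisely the content of the all-minors theorem, so your argument is not self-contained where the paper's (in principle) is. Conversely, the paper's direct verification avoids any sign analysis entirely, since it never writes down a determinant minor---but it pays for this by leaving the $i\neq j$ case as an exercise, which is arguably the same gap in disguise. Either route is acceptable here; yours is the more standard textbook derivation, the paper's the more hands-on one.
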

\begin{proof} Let
$$
A_{i,j}={1\over \mc P_\Gamma(t)}\sum_{C\in \mathrm{Cut}(\Gamma;\{v(i),v(j)\},\{v_\bullet\})} \prod_{e\in C}t_e
$$

For $1\leq i\leq V-1$, consider the summation
\begin{eqnarray*}
       \mc P_\Gamma(t)\sum_{j=1}^{V-1}A_{i,j}M_\Gamma(t)_{j,i}&=&\sum_{j=1}^{V-1} M_\Gamma(t)_{j,i}\sum_{{C\in \mathrm{Cut}(\Gamma;\{v(i),v(j)\},\{v_\bullet\})}} \prod_{e\in C}t_e\\
       &=&\sum_{\substack{C\in \mathrm{Cut}(\Gamma;\{v(i)\},\{v_\bullet\})\\ v(i)\in V(\Gamma_1(C)), v_\bullet\in V(\Gamma_2(C)) }}\prod_{e\in C}t_e\sum_{e^\prime\in E(G)}\sum_{\substack{1\leq j\leq V-1\\v(j)\in \Gamma_1(C)}}\rho_{v(i),e^\prime}{1\over t_{e^\prime}}\rho_{v(j),e^\prime}\\
       &=&\sum_{\substack{C\in \mathrm{Cut}(\Gamma;\{v(i)\},\{v_\bullet\})\\ v(i)\in V(\Gamma_1(C)), v_\bullet\in V(\Gamma_2(C)) }}\prod_{e\in C}t_e\sum_{\substack{e^\prime\in E(G)\\ l(e)=v(i), r(e)\in V(\Gamma_2)\\
       \text{or}\ r(e)=v(i), l(e)\in V(\Gamma_2)}}{1\over t_{e^\prime}}\\
       &=&\sum_{T\in \mathrm{Tree}(\Gamma)} \prod_{e\not\in T}t_e
      \end{eqnarray*}
where in the last step, we use the fact that given $v\neq v_\bullet$ and a spanning tree $T$ of $\Gamma$, there's a unique way to remove one edge in $T$, which is attached to $v$,  to make a cut that separates $v$ and $v_\bullet$. Therefore
$$
   \sum_{j=1}^{V-1} A_{i,j}M_\Gamma(t)_{j,i}=1, \ \ 1\leq i\leq V-1
$$

Similar combinatorial interpretation leads to
$$
   \sum_{k=1}^{V-1} A_{i,k}M_\Gamma(t)_{k,j}=0, \ \ 1\leq i, j\leq V_1, i\neq j
$$
We leave the details to the reader. It follows that $A_{i,j}$ is the inverse matrix of $M_\Gamma(t)_{i,j}$. \end{proof}

\begin{lem}\label{bound}The following sum is bounded
$$
      \left| \sum\limits_{i=1}^{V-1}\rho_{v(i),e} M_\Gamma^{-1}(t)_{i,j}\over t_e \right|\leq 2, \ \ \forall e\in E(G), 1\leq j\leq V-1
$$
\end{lem}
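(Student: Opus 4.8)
The plan is to leverage the combinatorial formula for $M_\Gamma^{-1}(t)$ from Lemma \ref{cut} to rewrite $\sum_{i=1}^{V-1}\rho_{v(i),e}M_\Gamma^{-1}(t)_{i,j}$ as a manifestly sign-controlled sum over cuts, and then bound the resulting expression term-by-term against $\mc P_\Gamma(t)\,t_e$, which is the common denominator. Concretely, I would first expand
$$
\sum_{i=1}^{V-1}\rho_{v(i),e}M_\Gamma^{-1}(t)_{i,j}=\frac{1}{\mc P_\Gamma(t)}\sum_{i=1}^{V-1}\rho_{v(i),e}\sum_{C\in\mathrm{Cut}(\Gamma;\{v(i),v(j)\},\{v_\bullet\})}\prod_{e'\in C}t_{e'}.
$$
Since $\rho_{v(i),e}$ is nonzero only for $i$ with $v(i)\in\{h(e),t(e)\}$ (and $v(i)\neq v_\bullet$), at most two values of $i$ contribute, with opposite signs. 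So the inner double sum collapses to a difference of two cut-sums, one indexed by cuts separating $\{h(e),v(j)\}$ from $v_\bullet$ and one by cuts separating $\{t(e),v(j)\}$ from $v_\bullet$ (with obvious modifications when $h(e)$ or $t(e)$ equals $v_\bullet$, in which case only one term survives and the bound is only easier).

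The key observation is then that $e$ itself plays a distinguished role. Given a cut $C$ separating $\{h(e),v(j)\}$ from $v_\bullet$, the edge $e$ joins $h(e)$ to $t(e)$; if $t(e)$ lands on the $v_\bullet$-side then $e\in C$, and otherwise $e\notin C$. I would split each cut-sum according to whether $e\in C$ or not, and match terms: a cut $C$ with $e\notin C$ separating $\{h(e),v(j)\}\,|\,v_\bullet$ is, because $h(e)$ and $t(e)$ sit on the same side, exactly a cut separating $\{t(e),v(j)\}\,|\,v_\bullet$ as well, so these contributions cancel in the difference. What remains after cancellation are only cuts $C$ containing $e$. For such a $C$, $\prod_{e'\in C}t_{e'}=t_e\prod_{e'\in C\setminus\{e\}}t_{e'}$, so after dividing by $t_e$ we are left with
$$
\frac{1}{\mc P_\Gamma(t)}\Big(\sum_{\substack{C\ni e\\ \text{sep.}\{h(e),v(j)\}|v_\bullet}}\!\!\prod_{e'\in C\setminus\{e\}}t_{e'}\;-\;\sum_{\substack{C\ni e\\ \text{sep.}\{t(e),v(j)\}|v_\bullet}}\!\!\prod_{e'\in C\setminus\{e\}}t_{e'}\Big),
$$
and the triangle inequality bounds the absolute value by the sum of the two, i.e. by $\frac{1}{\mc P_\Gamma(t)}$ times a sum over cuts through $e$ of $\prod_{e'\in C\setminus\{e\}}t_{e'}$. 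The final step is to show each such sum is $\le\mc P_\Gamma(t)$: a cut $C$ through $e$ corresponds, upon removing $e$, to a cut of $\Gamma\setminus e$, hence (as noted in the text after the definition of $\mathrm{Cut}$) to a spanning-tree complement of $\Gamma\setminus e$ together with one extra edge; since $\{e'\notin T : T\in\mathrm{Tree}(\Gamma)\}$-type products are exactly the monomials in $\mc P_\Gamma(t)$, and each cut-through-$e$ monomial $\prod_{e'\in C\setminus\{e\}}t_{e'}$ injects into the monomials of $\mc P_\Gamma(t)=\sum_{T}\prod_{e'\notin T}t_{e'}$ (take $T$ to be a spanning tree of $\Gamma$ contained in $(\Gamma\setminus e)\setminus(C\setminus\{e\})$, which exists since $C\setminus\{e\}$ is a spanning-tree-complement of a connected graph), the two sums are each bounded by $\mc P_\Gamma(t)$. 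This yields the bound $2$.

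The main obstacle I anticipate is the bookkeeping in the cancellation step: one must be careful that, in the difference of the two cut-sums, the matching of cuts with $e\notin C$ is a genuine bijection between $\mathrm{Cut}(\Gamma;\{h(e),v(j)\},\{v_\bullet\})$-terms not through $e$ and $\mathrm{Cut}(\Gamma;\{t(e),v(j)\},\{v_\bullet\})$-terms not through $e$, including the edge cases where $v(j)\in\{h(e),t(e)\}$ or where one endpoint of $e$ is $v_\bullet$ (so that $\rho_{v_\bullet,e}$ does not appear in the sum at all and only one cut-sum is present). Once that bijection is nailed down, the remaining estimates are the routine monomial-domination argument sketched above, essentially the same spanning-tree manipulation already used in the proof of Proposition \ref{finiteness lem}.
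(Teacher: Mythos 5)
Your proposal is correct and follows essentially the same route as the paper: via Lemma \ref{cut} the sum collapses to a difference of two cut-sums, the cuts not containing $e$ cancel, and each surviving monomial $\prod_{e'\in C\setminus\{e\}}t_{e'}$ is a spanning-tree-complement monomial of $\mc P_\Gamma(t)$, so each cut-sum is at most $\mc P_\Gamma(t)$ and the bound $2$ follows. One small correction in your final step: the spanning tree realizing the monomial is $T=(E(\Gamma)\setminus C)\cup\{e\}$ --- the two trees of $\Gamma\setminus C$ rejoined by the edge $e$ itself, so that $e\in T$ and $E(\Gamma)\setminus T=C\setminus\{e\}$ exactly --- not a spanning tree contained in $(\Gamma\setminus e)\setminus(C\setminus\{e\})=\Gamma\setminus C$, which is disconnected and contains no spanning tree of $\Gamma$; with that fix the map $C\mapsto T$ into $\mathrm{Tree}(\Gamma)$ is injective and the monomial-domination argument closes as you intended.
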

\begin{proof}
\begin{eqnarray*}
&&\sum\limits_{i=1}^{V-1}{\rho_{v(i),e}\over t_e }M_\Gamma^{-1}(t)_{i,j}\\&=&
{1\over \mc P_\Gamma(t)}\sum\limits_{\substack{C\in \mathrm{Cut}(\Gamma; \{v(j)\}, \{v\bullet\})\\ v(j)\in V(\Gamma_1(C)), v_\bullet\in V(\Gamma_2(C))}}\prod_{e^\prime\in C}t_{e^\prime} \sum_{\substack{1\leq i\leq V-1\\ v(i)\in \Gamma_1(C)}}{\rho_{v(i),e}\over t_e}\\
&=&{1\over \mc P_\Gamma(t)}\sum\limits_{\substack{C\in \mathrm{Cut}(\Gamma; \{v(j), l(e)\}, \{v_\bullet,r(e)\})}}{\prod_{e^\prime\in C}t_{e^\prime} \over t_e}
-{1\over \mc P_\Gamma(t)}\sum\limits_{\substack{C\in \mathrm{Cut}(\Gamma; \{v(j), r(e)\}, \{v_\bullet,l(e)\})}}{\prod_{e^\prime\in C}t_{e^\prime} \over t_e}
\end{eqnarray*}

Since each cut in the above summation is obtained from removing the edge $e$ from a spanning tree containing $e$, the lemma follows from fact that  $\mc P_\Gamma(t)=\sum_{T\in \mathrm{Tree}(\Gamma)} \prod_{e\not\in T}t_e$
represents the sum of the contributions from all such spanning trees.
\end{proof}

\vjump{3}

Next, we consider another type of graph integral which appears in the proof of Proposition \ref{antiholomorphic-dependence}. Let
\begin{align*}
    P_\epsilon^L(z,\bar z)=\int_\epsilon^L {dt\over 4\pi t}\bracket{\bar z\over 4t}^2 e^{-|z|^2/4t}
\end{align*}
and
\begin{align*}
    U_\epsilon(z,\bar z)={1\over 4\pi \epsilon}\bracket{\bar z\over 4\epsilon}e^{-|z|^2/4\epsilon}
\end{align*}

Let $(\Gamma, n)$ be a connected decorated graph without self-loops, $V(\Gamma)$ be the set of vertices, $E(\Gamma)$ be the set of edges, $V=|V(\Gamma)|, E=|E(\Gamma)|$. We index the set of vertices as in Proposition \ref{finiteness lem} by
$$
    v:\{1,2,\cdots, V\}\to V(\Gamma)
$$
and index the set of edges by
$$
   e: \{0,1,2,\cdots, E-1\}\to E(\Gamma)
$$
such that $e(0),e(1),\cdots,e(k)\in E(\Gamma)$ are all the edges connecting $v(1),v(V)$. We consider the following Feynman graph integral by putting $U_\epsilon$ on $e(0)$, putting $P_\epsilon^L$ to all other edges, and putting a smooth function $\Phi$ on $\C^{|V(\Gamma)|}$ with compact support  for the vertices. We would like to compute the following limit of the graph integral
$$
    \lim_{\epsilon\to 0}\prod_{i=1}^{V}\int d^2z_i
\pa^{n_0}_{z_{e(0)}}U_\epsilon(z_{e(0)}, \bar z_{e(0)})\left(\prod\limits_{i=1}^{E-1}\pa^{n_i}_{z_{e(i)}}P_\epsilon^L(z_{e(i)}, \bar z_{e(i)})\right)\Phi
$$
where we use the notation that
$$
     z_e\equiv z_{i}-z_{j}, \ \ \mbox{if}\ h(e)=v(i), t(e)=v(j)
$$
\begin{prop}\label{deformed-bracket}
The above limit exists and we have the identity
\begin{align*}
\begin{split}
& \lim_{\epsilon\to 0}\prod_{i=1}^{V}\int d^2z_i
\pa^{n_0}_{z_{e(0)}}U_\epsilon(z_{e(0)}, \bar z_{e(0)})\left(\prod\limits_{i=1}^{E-1}\pa^{n_i}_{z_{e(i)}}P_\epsilon^L(z_{e(i)}, \bar z_{e(i)})\right)\Phi\\
=&\lim_{\epsilon\to 0}{A(n_0;n_1,\cdots,n_k)\over (4\pi)^k}\prod_{i=2}^{V}\int d^2z_i \left.\pa_{z_1}^{n_0+1+\sum\limits_{i=1}^k(n_i+2)}\left( \left(\prod\limits_{i=k+1}^{E-1}\pa^{n_i}P_\epsilon^L(z_{e(i)}, \bar z_{e(i)})\right)\Phi \right)\right|_{z_1=z_V}
\end{split}
\end{align*}
where the constant $A(n_0,n_1,\cdots,n_k)$ is a rational number given by
$$
     A(n_0;n_1,\cdots,n_k)=\int_0^1\cdots \int_0^1 \prod\limits_{i=1}^{k}{du_i} {\prod\limits_{i=1}^k u_i^{n_i+1}\over \left(1+\sum\limits_{i=1}^k u_i\right)^{\sum\limits_{j=0}^k (n_j+2)}}
$$
\end{prop}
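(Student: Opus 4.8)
The plan is to treat the multi-edge $e(0),\dots,e(k)$ joining $v(1)$ and $v(V)$ as a local object which, in the limit $\epsilon\to 0$, collapses $v(1)$ onto $v(V)$ and inserts the differential operator $\frac{A}{(4\pi)^k}\,\pa_{z_1}^{M}\big|_{z_1=z_V}$ with $M=n_0+1+\sum_{i=1}^k(n_i+2)$, while the remaining edges $e(k+1),\dots,e(E-1)$ and all other vertices ride along passively. I would first orient $e(0),\dots,e(k)$ so that each of their propagators depends only on $z:=z_{v(1)}-z_{v(V)}$, absorbing the resulting signs (recalling that $P_\epsilon^L$ is even and $U_\epsilon$ odd in its argument), and set $G=G_\epsilon:=\bigl(\prod_{i=k+1}^{E-1}\pa^{n_i}P_\epsilon^L(z_{e(i)},\bar z_{e(i)})\bigr)\Phi$, which is a smooth, Gaussian-decaying function of the vertex coordinates for each fixed $\epsilon,L>0$.

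Next I would perform the $z_{v(1)}$-integral with $z_2,\dots,z_V$ frozen. Introducing Schwinger parameters $t_1,\dots,t_k\in[\epsilon,L]$ for $e(1),\dots,e(k)$, and using that $\pa_z$ annihilates the antiholomorphic prefactors and only hits the Gaussians, one gets
\[
   \pa_z^{n_0}U_\epsilon(z)\,\prod_{i=1}^k\pa_z^{n_i}P_\epsilon^L(z)
   =\int_{[\epsilon,L]^k}\!\bigl(\prod_{i=1}^{k}dt_i\bigr)\,c_\epsilon(t)\,\bar z^{M}\,e^{-a|z|^2},\qquad
   a=\tfrac1{4\epsilon}+\sum_{i=1}^{k}\tfrac1{4t_i},
\]
for an explicit elementary factor $c_\epsilon(t)$. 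The identity $\bar z^{M}e^{-a|z|^2}=(-a)^{-M}\pa_z^{M}e^{-a|z|^2}$ and integration by parts in $z$ transfer $\pa_z^{M}$ onto $G$, so that the $z$-integral becomes a numerical constant times $a^{-M-1}\int_\C d^2w\,e^{-|w|^2}\,(\pa_{z_1}^{M}G)\bigl(z_V+w/\sqrt a,\dots\bigr)$ after the rescaling $z=w/\sqrt a$. Substituting $t_i=\epsilon u_i$ one checks that all powers of $\epsilon$ cancel; as $\epsilon\to 0$ one has $a\to\infty$ so the $w$-Gaussian localizes, the upper limits $L/\epsilon\to\infty$, and after the further substitution $u_i\mapsto u_i^{-1}$ the $u_i$-integrals over $[1,\infty)$ turn into $\int_0^1 du_i\,u_i^{n_i+1}(\cdots)$, the numerical factors combining so that they assemble exactly into $\frac{A(n_0;n_1,\dots,n_k)}{(4\pi)^k}$. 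This reproduces the stated local identity; the case $V=2$, $E=k+1$ (where $G=\Phi$) is just the direct computation.

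The main obstacle is to justify that $\lim_{\epsilon\to 0}$ commutes with the $t_i$-integrals, the $z$-integral, and the spectator integral $\prod_{i=2}^V\int d^2z_i$. Since the propagators $P_\epsilon^L(z_{e(i)})$ with $i>k$ occurring in $G_\epsilon$ are themselves singular as $\epsilon\to 0$, the two sides must be compared \emph{before} taking the limit: the $z_{v(1)}$-integral equals $\frac{A_\epsilon}{(4\pi)^k}(\pa_{z_1}^{M}G_\epsilon)|_{z_1=z_V}$ plus a remainder assembled from the higher Taylor coefficients of $G_\epsilon$ at $z_1=z_V$ weighted against the narrowing Gaussian, with $A_\epsilon\to A$. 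One then shows this remainder vanishes after integrating out $z_2,\dots,z_V$: each additional Taylor order contributes an extra power of $\sqrt\epsilon$ from a Gaussian moment, which beats the at-most-polynomial-in-$1/\sqrt\epsilon$ growth of the corresponding derivative of $G_\epsilon$. The requisite polynomial bound, uniform in $\epsilon$, is exactly what underlies Proposition \ref{finiteness lem}: rewriting the spectator integral in Schwinger form, Lemmas \ref{determinant} and \ref{bound} control the inverse graph matrix independently of the $t$-parameters, hence control all the derivatives that appear, so dominated convergence applies and the remainder drops out, yielding the claimed identity of limits.
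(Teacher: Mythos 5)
Your local computation at the multi-edge is correct and, after unwinding, is essentially the paper's own calculation in a more transparent form: collecting the $k+1$ propagators between $v(1)$ and $v(V)$ into a single factor $\bar z^{M}e^{-a|z|^2}$ with $a=\tfrac1{4\epsilon}+\sum_i\tfrac1{4t_i}$, trading $\bar z^{M}$ for $a^{-M}\pa_z^{M}$, integrating by parts, and substituting $t_i=\epsilon u_i$, $u_i\mapsto u_i^{-1}$ does make all powers of $\epsilon$ cancel and produces exactly ${A(n_0;n_1,\dots,n_k)/(4\pi)^k}$; the paper reaches the same point through the full incidence-matrix/Schwinger representation of the whole graph.

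The gap is in the limit-exchange step, which you correctly identify as the main obstacle but justify by a mechanism that fails. You claim each additional Taylor order gains a power of $\sqrt\epsilon$ from a Gaussian moment ``which beats the at-most-polynomial-in-$1/\sqrt\epsilon$ growth of the corresponding derivative of $G_\epsilon$.'' But $\sup_z|\pa^m P_\epsilon^L(z,\bar z)|\sim\epsilon^{-(m+2)/2}$, so every extra $z_1$-derivative that falls on a propagator of $G_\epsilon$ incident to $v(1)$ costs exactly one factor of $\epsilon^{-1/2}$, cancelling the $\epsilon^{1/2}$ gained from the extra Gaussian moment; in sup norm the remainder stays of size $\epsilon^{-c_0}$ at \emph{every} Taylor order and never improves. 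Any repair must work in $L^1$ after integrating the spectator vertices, and there the paper does not Taylor-expand at all: it keeps the full Schwinger representation, rescales only $t_{e(1)},\dots,t_{e(k)}$ by $\epsilon$, proves pointwise convergence of the resulting integrand $F(t;\epsilon)$, and dominates it using two inputs your sketch does not identify: (i) the refined estimate $0\le M_\Gamma^{-1}(t)_{1,j}/t_{e(\alpha)}\le \bigl(t_{e(\alpha)}(\tfrac1\epsilon+\sum_{i=1}^k t_{e(i)}^{-1})\bigr)^{-1}$ coming from Lemma \ref{cut} (the uniform constant $2$ of Lemma \ref{bound} is not enough to make the $t_{e(\alpha)}$-integrals over $[1,\infty)$ converge), and (ii) the combinatorial inequality $\mc P_{\Gamma}(\epsilon,\epsilon t_{e(1)},\dots,\epsilon t_{e(k)},t_{e(k+1)},\dots)\ge\epsilon^k\bigl(\prod_\alpha t_{e(\alpha)}\bigr)\bigl(1+\sum_\alpha t_{e(\alpha)}^{-1}\bigr)\mc P_{\bar\Gamma}(t_{e(k+1)},\dots)$ relating the graph polynomial of $\Gamma$ to that of the collapsed graph $\bar\Gamma$. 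These are what furnish an integrable dominating function on $[1,\infty)^k\times[0,L]^{E-1-k}$; without them the appeal to dominated convergence is not justified.
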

\begin{proof}
\begin{align*}
&\prod_{i=1}^{V}\int d^2z_i \pa^{n_0}U_\epsilon(z_{e(0)})\left(\prod\limits_{i=1}^{E-1}\pa^{n_i}H_\epsilon^L(z_{e(i)})\right)\Phi\\
=&  \prod_{i=1}^{V}\int d^2z_i \prod_{i=1}^{E-1} \int_\epsilon^L dt_{e(i)} \left({1\over 4\pi \epsilon}\left({\bar z_{e(0)}\over 4\epsilon}\right)^{n_0+1}\right)\left(\prod_{i=1}^{E-1}{1\over 4\pi t_{e_(i)}}\left( {\bar z_{e(i)}\over 4t_{e(i)}} \right)^{n_i+2}  \right)  e^{-\left({|z_{e(0)}|^2\over 4\epsilon}+\sum\limits_{j=1}^{E-1}{|z_{e(j)}|^2\over 4t_{e(i)}}\right)}\Phi
\end{align*}

We will use the same notations as in the proof of Proposition \ref{finiteness lem}. The incidence matrix $\{\rho_{v,e}\}_{v\in V(G), e\in E(G)}$ is defined by
$$
     \rho_{v,e}=\begin{cases}  1& h(e)=v \\ -1 & t(e)=v\\ 0 & \mbox{otherwise} \end{cases}
$$
Without loss of generality, we assume that the orientation of $e(0)$ is such that
$$
   \rho_{v(1),e(0)}=1, \rho_{v(V),e(0)}=-1
$$
The $(V-1)\times (V-1)$ matrix $M_\Gamma(t)$ is defined by
$$
   M_\Gamma(t)_{i,j}=\sum_{l=0}^{E-1}\rho_{v(i),e(l)}{1\over t_{e(l)}}\rho_{v(j),e(l)}, \ \ \ \ \ \ 1\leq i,j\leq V-1
$$
where we use the convention  that $t_{e(0)}=\epsilon$. Under the following linear change of variables
$$
    \begin{cases}
         z_{i}=y_{i}+y_{V}& 1\leq i\leq  V-1\\
         z_{V}=y_{V}
    \end{cases}
$$
and use integration by parts

\begin{align*}
   &\prod_{i=1}^{V}\int d^2z_i \prod_{i=1}^{E-1} \int_\epsilon^L dt_{e(i)} \left({1\over 4\pi \epsilon}\left({\bar z_{e(0)}\over 4\epsilon}\right)^{n_0+1}\right)\left(\prod_{i=1}^{E-1}{1\over 4\pi t_{e_(i)}}\left( {\bar z_{e(i)}\over 4t_{e(i)}} \right)^{n_i+2}  \right)  e^{-\left({|z_{e(0)}|^2\over 4\epsilon}+\sum\limits_{j=1}^{E-1}{|z_{e(j)}|^2\over 4t_{e(i)}}\right)}\Phi\\
   \\=&  \int d^2y_V \prod_{i=1}^{V-1}\int d^2y_i \prod_{i=1}^{E-1}\int_\epsilon^L {dt_{e(i)}\over 4\pi t_{e(i)}} \exp\left(-{1\over 4}\sum_{i,j=1}^{V-1}y_{i}M_{\Gamma}(t)_{i,j}\bar y_{j} \right)\\
&{1\over 4\pi \epsilon} \left(\sum\limits_{j=1}^{V-1}{\sum\limits_{i=1}^{V-1}\rho_{v(i),e(0)}M_\Gamma^{-1}(t)_{i,j}\over \epsilon } {\pa\over \pa y_j}\right)^{n_0+1}
\prod\limits_{\alpha=1}^{E-1}\left(\sum\limits_{j=1}^{V-1}{\sum\limits_{i=1}^{V-1}\rho_{v(i),e(\alpha)}M_\Gamma^{-1}(t)_{i,j}\over t_{e(\alpha)} } {\pa\over \pa y_j}\right)^{n_\alpha+2}\Phi
\end{align*}
Note that for $0\leq \alpha\leq k$ and $1\leq i\leq V-1$, $\rho_{v(i),e(\alpha)}$ is nonzero only for $\rho_{v(1),e(\alpha)}=1$. Consider the change of variables
\begin{eqnarray*}
\begin{array}{lc}
      t_{e(i)}\to \epsilon t_{e(i)} & 1\leq i\leq k\\
      t_{e(i)}\to t_{e(i)} & k+1\leq i\leq E-1
\end{array}
\end{eqnarray*}
we get
\begin{align*}
 &\int d^2y_V \prod_{i=1}^{V-1}\int d^2y_i \prod_{i=1}^{k}\int_1^{L/\epsilon} {dt_{e(i)}\over 4\pi t_{e(i)}}
 \prod_{i=k+1}^{E-1}\int_\epsilon^{L} {dt_{e(i)}\over 4\pi t_{e(i)}}  \exp\left(-{1\over 4}\sum_{i,j=1}^{V-1}y_{i}\bar y_{j}M_{\Gamma}(\tilde t)_{i,j} \right)\\
&{1\over 4\pi \epsilon} \left(\sum\limits_{j=1}^{V-1}{\sum\limits_{i=1}^{V-1}\rho_{v(i),e(0)}M_\Gamma^{-1}(\tilde t)_{i,j}\over \epsilon } {\pa\over \pa y_j}\right)^{n_0+1}\prod\limits_{\alpha=1}^{E-1}\left(\sum\limits_{j=1}^{V-1}{\sum\limits_{i=1}^{V-1}\rho_{v(i),e(\alpha)}M_\Gamma^{-1}(\tilde t)_{i,j}\over \tilde t_{e(\alpha)} } {\pa\over \pa y_j}\right)^{n_\alpha+2}\Phi\\
=&\prod_{i=1}^{k}\int_1^{L/\epsilon} {dt_{e(i)}\over 4\pi}
 \prod_{i=k+1}^{E-1}\int_\epsilon^{L} {dt_{e(i)}\over 4\pi }F(t;\epsilon)
\end{align*}
where $\tilde t$'s are define by
\begin{eqnarray*}
        \begin{array}{lc}
              \tilde t_{e(0)}=\epsilon&\\
              \tilde t_{e(i)}=\epsilon t_{e(i)} & \text{if}\ 1\leq i\leq k\\
              \tilde t_{e(i)}=t_{e(i)} & \text{if}\ k+1\leq i\leq E-1
        \end{array}
\end{eqnarray*}
and
\begin{align*}
   F(t;\epsilon)=& \prod_{i=1}^{V}\int d^2y_i {1\over \prod\limits_{i=1}^{E-1}t_{e(i)}}\exp\left(-{1\over 4}\sum_{i,j=1}^{V-1}y_{i}\bar y_{j}M_{\Gamma}(\tilde t)_{i,j} \right)\\
&{1\over 4\pi \epsilon} \left(\sum\limits_{j=1}^{V-1}{\sum\limits_{i=1}^{V-1}\rho_{v(i),e(0)}M_\Gamma^{-1}(\tilde t)_{i,j}\over \epsilon } {\pa\over \pa y_j}\right)^{n_0+1}\prod\limits_{\alpha=1}^{E-1}\left(\sum\limits_{j=1}^{V-1}{\sum\limits_{i=1}^{V-1}\rho_{v(i),e(\alpha)}M_\Gamma^{-1}(\tilde t)_{i,j}\over \tilde t_{e(\alpha)} } {\pa\over \pa y_j}\right)^{n_\alpha+2}\Phi
\end{align*}

We first show that $\lim\limits_{\epsilon\to 0}F(t;\epsilon)$ exists. Using integration by parts,
\begin{align*}
F(t;\epsilon)=&\prod_{i=1}^{V}\int d^2y_i {1\over \prod\limits_{i=1}^{E-1}t_{e(i)}}
\exp\left(-{|y_1|^2\over 4\epsilon}\left(1+\sum\limits_{\alpha=1}^k {1\over t_{e(\alpha)}}\right) \right)
{1\over 4\pi \epsilon} \left({\bar y_1\over 4\epsilon}\right)^{n_0+1}
\prod\limits_{\alpha=1}^{k}\left({\bar y_1\over 4\epsilon t_{e(\alpha)}}\right)^{n_\alpha+2}
\\
&
\exp\left(-{1\over 4}\sum_{i,j=1}^{V-1}y_{i}\bar y_{j}\sum\limits_{\beta=k+1}^{E-1}{\rho_{v(i),e({\beta})}\rho_{v(j),e(\beta)}\over t_{e(\beta)}} \right)
\prod\limits_{\beta=k+1}^{E-1}\left({\sum\limits_{i=1}^{V-1}\rho_{v(i),e(\beta)}\bar y_i\over 4t_{e(\beta)}}\right)^{n_\alpha+2}\Phi\\
=&\prod_{i=1}^{V}\int d^2y_i {1\over \prod\limits_{i=1}^{E-1}t_{e(i)}}\\
&\exp\left(-{|y_1|^2\over 4\epsilon}\left(1+\sum\limits_{\alpha=1}^k {1\over t_{e(\alpha)}}\right) \right)
{1\over 4\pi \epsilon}\left(\prod\limits_{\alpha=1}^k {1\over t_{e(\alpha)}}\right)^{n_\alpha+2}
{1\over \left(1+\sum\limits_{\alpha=1}^k{1\over t_{e(\alpha)}}\right)^{n_0+1+\sum\limits_{\alpha=1}^k(n_\alpha+2)}}
\\
& \left({\pa\over \pa y_1}\right)^{n_0+1+\sum\limits_{\alpha=1}^k(n_\alpha+2)}
\left(e^{-{1\over 4}\suml_{i,j=1}^{V-1}y_{i}\bar y_{j}\sum\limits_{\beta=k+1}^{E-1}{\rho_{v(i),e({\beta})}\rho_{v(j),e(\beta)}\over t_{e(\beta)}} }
\prod\limits_{\beta=k+1}^{E-1}\left({\sum\limits_{i=1}^{V-1}\rho_{v(i),e(\beta)}\bar y_i\over 4t_{e(\beta)}}\right)^{n_\alpha+2}\Phi\right)\\
\end{align*}
Using the property of the heat kernel under the limit $\epsilon\to 0$, we get
\begin{align*}
\lim\limits_{\epsilon\to 0}F(t;\epsilon)=&\prod_{i=2}^{V}\int d^2y_i{1\over \prod\limits_{i=1}^{E-1}t_{e(i)}}\left(\prod\limits_{\alpha=1}^k {1\over t_{e(\alpha)}}\right)^{n_\alpha+2}
{1\over \left(1+\sum\limits_{\alpha=1}^k{1\over t_{e(\alpha)}}\right)^{\sum\limits_{\alpha=0}^k(n_\alpha+2)}}\\
&\left.\left({\pa\over \pa y_1}\right)^{n_0+1+\sum\limits_{\alpha=1}^k(n_\alpha+2)}
\left(e^{-{1\over 4}\suml_{i,j=1}^{V-1}y_{i}\bar y_{j}\sum\limits_{\beta=k+1}^{E-1}{\rho_{v(i),e({\beta})}\rho_{v(j),e(\beta)}\over t_{e(\beta)}} }
\prod\limits_{\beta=k+1}^{E-1}\left({\sum\limits_{i=1}^{V-1}\rho_{v(i),e(\beta)}\bar y_i\over 4t_{e(\beta)}}\right)^{n_\alpha+2}\Phi\right)\right|_{y_1=0}
\end{align*}

\begin{claim}
$$
\lim\limits_{\epsilon\to 0}\prod_{i=1}^{k}\int_1^{L/\epsilon} {dt_{e(i)}\over 4\pi}
 \prod_{i=k+1}^{E-1}\int_\epsilon^{L} {dt_{e(i)}\over 4\pi }F(t;\epsilon)=
 \prod_{i=1}^{k}\int_1^\infty {dt_{e(i)}\over 4\pi}
 \prod_{i=k+1}^{E-1}\int_0^{L} {dt_{e(i)}\over 4\pi }\lim\limits_{\epsilon\to 0}F(t;\epsilon)
 $$
\end{claim}
Clearly Proposition \ref{deformed-bracket} follows from the claim. \\ \\
\indent To prove the claim, first notice that we have the estimate
$$
  0\leq {M_\Gamma^{-1}(t)_{1,j}\over t_{e(\alpha)} } \leq {1\over t_{e(\alpha)}\left({1\over \epsilon}+\sum\limits_{i=1}^k{1\over t_{e(i)}}\right)}
$$
 for $1\leq \alpha\leq k, 1\leq j\leq V-1$. In fact, by Lemma \ref{cut},
\begin{align*}
 M_\Gamma^{-1}(t)_{1,j}=&{\sum\limits_{C\in \mathrm{Cut}(\Gamma;\{v(1),v(j)\},\{v_V\})} \prod\limits_{e\in C}t_e\over \sum\limits_{T\in \mathrm{Tree}(\Gamma)} \prod\limits_{e\not\in T}t_e}\\
 \leq &{\sum\limits_{C\in \mathrm{Cut}(\Gamma;\{v(1),v(j)\},\{v_V\})} \prod\limits_{e\in C}t_e\over \sum\limits_{\substack{T\in \mathrm{Tree}(\Gamma)\\ e_{i}\in E(T)\ \text{for some}\ 0\leq i\leq k}} \prod\limits_{e\not\in T}t_e}\leq {1\over \left({1\over \epsilon}+\sum\limits_{i=1}^k{1\over t_{e(i)}}\right)}
\end{align*}
For $0\leq \alpha\leq E-1, 1\leq j\leq V-1$, $\sum\limits_{i=1}^{V-1}\rho_{v(i),e(\alpha)}M_\Gamma^{-1}(t)_{i,j}\over t_{e(\alpha)}$ is bounded by a constant by Lemma \ref{bound}. It follows that
\begin{eqnarray*}
 &&  |F(t;\epsilon)|\\&\leq & \prod_{i=1}^{V}\int d^2y_i {1\over \prod\limits_{i=1}^{E-1}t_{e(i)}}\exp\left(-{1\over 4}\sum_{i,j=1}^{V-1}y_{i}\bar y_{j}M_{\Gamma}(\tilde t)_{i,j} \right)\\
&&\left|{1\over 4\pi \epsilon} \left(\sum\limits_{j=1}^{V-1}{\sum\limits_{i=1}^{V-1}\rho_{v(i),e(0)}M_\Gamma^{-1}(\tilde t)_{i,j}\over \epsilon } {\pa\over \pa y_j}\right)^{n_0+1}\prod\limits_{\alpha=1}^{E-1}\left(\sum\limits_{j=1}^{V-1}{\sum\limits_{i=1}^{V-1}\rho_{v(i),e(\alpha)}M_\Gamma^{-1}(\tilde t)_{i,j}\over \tilde t_{e(\alpha)} } {\pa\over \pa y_j}\right)^{n_\alpha+2}\Phi\right|\\
&\leq& \prod_{i=1}^{V}\int d^2y_i {1\over \prod\limits_{i=1}^{E-1}t_{e(i)}}\exp\left(-{1\over 4}\sum_{i,j=1}^{V-1}y_{i}\bar y_{j}M_{\Gamma}(\tilde t)_{i,j} \right){1\over 4\pi \epsilon}\prod\limits_{1\leq \alpha\leq k}\left( {1\over t_{e(\alpha)}\left({1}+\sum\limits_{i=1}^k{1\over t_{e(i)}}\right)}\right)^{n_\alpha+2}\tilde \Phi\\
\end{eqnarray*}
where $\tilde \Phi$ is some non-negative smooth function on $\C^{V}$ with compact support. Integrating over $y_i$'s we get
\begin{align*}
|F(t;\epsilon)|\leq & C {1\over \prod\limits_{i=1}^{E-1}t_{e(i)}}{1\over \epsilon \det M_{\Gamma}(\tilde t)}\prod\limits_{1\leq \alpha\leq k}\left( {1\over t_{e(\alpha)}\left({1}+\sum\limits_{i=1}^k{1\over t_{e(i)}}\right)}\right)^{n_\alpha+2}\\
=&C {\epsilon^k\over  \mc P_{\Gamma}(\epsilon, \epsilon t_{e(1)}, \cdots, \epsilon t_{e(k)}, t_{e_{k+1}}, \cdots, t_{e(E-1)})}\prod\limits_{1\leq \alpha\leq k}\left( {1\over t_{e(\alpha)}\left({1}+\sum\limits_{i=1}^k{1\over t_{e(i)}}\right)}\right)^{n_\alpha+2}\\
\leq&C {1\over \mc P_{\bar\Gamma}(t_{e_{k+1}}, \cdots, t_{e(E-1)})\prod\limits_{\alpha=1}^k t_{e(\alpha)}}\prod\limits_{\alpha=1}^k {1\over t_{e(\alpha)}^{n_\alpha+2}
}\end{align*}
where $C$ is a constant that only depends on $\tilde \Phi$, $\bar\Gamma$ is the graph obtained by collapsing the vertices $v(1), v(V)$ and all $e(0),e(1),\cdots,e(k)$, and $\mc P_\Gamma$ is defined in Lemma \ref{cut}. Here we have used the simple combinatorial fact that
$$
     \mc P_{\Gamma}(\epsilon, \epsilon t_{e(1)}, \cdots, \epsilon t_{e(k)}, t_{e_{k+1}}, \cdots, t_{e(E-1)})\geq \epsilon^k \left(\prod\limits_{\alpha=1}^kt_{e(\alpha)} \right)\left(1+\sum\limits_{\alpha=1}^k {1\over t_{e(\alpha)}}\right)\mc P_{\bar\Gamma}(t_{e_{k+1}}, \cdots, t_{e(E-1)})
$$
Since $\bar\Gamma$ has no self-loops,
$$
\prod_{i=1}^{k}\int_1^\infty {dt_{e(i)}\over 4\pi}\prod\limits_{\alpha=1}^k {1\over t_{e(\alpha)}^{n_\alpha+3}}
 \prod_{i=k+1}^{E-1}\int_0^{L} {dt_{e(i)}\over 4\pi }{1\over \mc P_{\bar\Gamma}(t_{e_{k+1}}, \cdots, t_{e(E-1)})}<\infty
$$
Now the claim follows from dominated convergence theorem.
\end{proof}

\begin{bibdiv}
\begin{biblist}

   \bib{topstring-modularform}{article}{
   author={Aganagic, Mina},
   author={Bouchard, Vincent},
   author={Klemm, Albrecht},
   title={Topological strings and (almost) modular forms},
   journal={Comm. Math. Phys.},
   volume={277},
   date={2008},
   number={3},
   pages={771--819},
   issn={0010-3616},
   review={\MR{2365453 (2008m:81174)}},
   doi={10.1007/s00220-007-0383-3},
    }

   \bib{BCOV}{article}{
   author={Bershadsky, M.},
   author={Cecotti, S.},
   author={Ooguri, H.},
   author={Vafa, C.},
   title={Kodaira-Spencer theory of gravity and exact results for quantum
   string amplitudes},
   journal={Comm. Math. Phys.},
   volume={165},
   date={1994},
   number={2},
   pages={311--427},
   issn={0010-3616},
   review={\MR{1301851 (95f:32029)}}
   }

   \bib{Si-Kevin}{article}{
     author={Costello, Kevin},
     author={Li, Si},
     title={Quantum {BCOV} theory on {Calabi-Yau} manifolds and the higher genus {B}-model},
   }

    \bib{Dijkgraaf-mirror}{article}{
   author={Dijkgraaf, Robbert},
   title={Mirror symmetry and elliptic curves},
   conference={
      title={The moduli space of curves},
      address={Texel Island},
      date={1994},
   },
   book={
      series={Progr. Math.},
      volume={129},
      publisher={Birkh\"auser Boston},
      place={Boston, MA},
   },
   date={1995},
   pages={149--163},
   review={\MR{1363055 (96m:14072)}},
}
	
\bib{QFT}{book}{
   author={Itzykson, Claude},
   author={Zuber, Jean Bernard},
   title={Quantum field theory},
   note={International Series in Pure and Applied Physics},
   publisher={McGraw-Hill International Book Co.},
   place={New York},
   date={1980},
   pages={xxii+705},
   isbn={0-07-032071-3},
}

  \bib{almost-modular-form}{article}{
   author={Kaneko, Masanobu},
   author={Zagier, Don},
   title={A generalized Jacobi theta function and quasimodular forms},
   conference={
      title={The moduli space of curves},
      address={Texel Island},
      date={1994},
   },
   book={
      series={Progr. Math.},
      volume={129},
      publisher={Birkh\"auser Boston},
      place={Boston, MA},
   },
   date={1995},
   pages={165--172},
   review={\MR{1363056 (96m:11030)}},
    }

  \bib{thesis}{thesis}{
    author={Li, Si},
    title={Thesis: Calabi-Yau Geometry and Higher Genus Mirror Symmetry},
    date={2011}
   }

    \bib{serre}{book}{
   author={Serre, J.-P.},
   title={A course in arithmetic},
   note={Translated from the French;
   Graduate Texts in Mathematics, No. 7},
   publisher={Springer-Verlag},
   place={New York},
   date={1973},
   pages={viii+115},
   review={\MR{0344216 (49 \#8956)}},
    }

\end{biblist}
\end{bibdiv}
\vskip 1em
\noindent \text{S. Li, Mathematics Department, Northwestern University, Evanston, IL 60201. }
\end{document}